\newtheorem{question}{Question}[section]
\newcommand{\dotminus}{\mathbin{\text{\@dotminus}}}
\newcommand{\@dotminus}{%
  \ooalign{\hidewidth\raise.15ex\hbox{$\scriptstyle\circ$}\hidewidth\cr$\m@th-$\cr}%
}
\newcommand{\dotpplus}{\mathbin{\text{\@dotpplus}}}
\newcommand{\@dotpplus}{%
  \ooalign{\hidewidth\raise.15ex\hbox{$\scriptstyle\circ$}\hidewidth\cr$\m@th+$\cr}%
}
\newtheorem{observation}{Observation}[section]
\newcommand{\R}{\mathbb{R}}
\newcommand{\K}{\ensuremath{\mathcal{K}}}
\newcommand{\F}{\ensuremath{\mathcal{F}}}
\newcommand{\sym}{\ensuremath{\mathcal{S}}}
\newcommand{\cp}{\ensuremath{\mathcal{CP}}}
\newcommand{\cop}{\ensuremath{\mathcal{COP}}}
\newcommand{\cc}{\textnormal{cc\,}}
\newcommand{\cpr}{\textnormal{cpr\,}}
\newcommand{\plmod}{\ensuremath{\dotpplus}}
\newcommand{\vc}[1]{\ensuremath{{\bf #1}}}
\newcommand{\vct}[1]{\ensuremath{{\bf #1}^T\!}}
\newcommand{\vi}[2]{\ensuremath{{\bf {#1}}_{#2}}}
\newcommand{\vit}[2]{\ensuremath{{\bf {#1}}_{#2}^T\!}}
\newcommand{\vdn}[3]{\ensuremath{\vi {#1}{#2},\ldots,\vi {#1}{#3}}}
\newcommand{\vvt}[1]{\ensuremath{\vc {#1} \vct {#1}}}
\newcommand{\vvti}[2]{\ensuremath{{\bf #1}_{#2}^{\phantom{t}} \vit {#1}{#2}}}
\newtheorem{defin}{Definition}[section]
\newtheorem{theorem}[defin]{Theorem}
\newtheorem{remark}[defin]{Remark}
\newtheorem{corollary}[defin]{Corollary}
\newtheorem{lemma}[defin]{Lemma}
\newtheorem{example}[defin]{Example}
\DeclareMathOperator{\trace}{trace}
\DeclareMathOperator{\rank}{rank}
\DeclareMathOperator{\cone}{cone}
\DeclareMathOperator{\supp}{supp}
\newcommand{\bR} {\ensuremath{\mathbb{R}}}
\author{Naomi Shaked-Monderer\thanks{The Max Stern Yezreel Valley College, Yezreel Valley 1930600, Israel.
Email: nomi@technion.ac.il }}
\title{On the number of CP factorizations\\ of a completely positive matrix}
\date{\today}
\begin{document}
\maketitle
\begin{abstract}
\noindent A square matrix $A$ is completely positive if $A=BB^T$, where $B$ is a (not necessarily square)
nonnegative matrix. In general, a completely positive matrix may have many, even infinitely many,
such CP factorizations. But in some cases a unique CP factorization exists. We prove a simple necessary and sufficient
condition for a completely positive matrix whose graph is triangle free to
have a unique CP factorization. This implies uniqueness
of the CP factorization for some other matrices  on the boundary of the cone $\cp_n$ of $n\times n$
completely positive matrices. We also describe the minimal face of $\cp_n$ containing a completely positive $A$.
If $A$ has a unique CP factorization, this face is polyhedral.

\medskip

\noindent
\textbf{Keywords:} Completely positive matrix, CP-rank, CP factorization, Minimal CP factorization, $M$-matrix, Copositive matrix.

\noindent
\textbf{Mathematical Subject Classification 2020:} 15A23, 15B48

\end{abstract}

\section{introduction}
An $n\times n$ matrix $A$ is said to be {\it completely positive} if $A=BB^T$
for some (entrywise) nonnegative $B$.
The matrix $B$ need not be square, and typically
the completely positive $A$ has many, often infinitely many, such factorizations.
The minimal number of columns in such a nonnegative factor $B$ is called the {\it cp-rank}
of $A$, and denoted by $\cpr A$. A  factorization $A=BB^T$, $B\ge 0$, is called a {\it CP factorization} of $A$,
and if $B$ has $\cpr A$ columns, a {\it minimal CP factorization}. These are also, in general, not unique.
In this paper we consider
the number of CP factorizations and minimal CP factorizations of a given
completely positive matrix.

Determining whether a given matrix is completely positive
is hard (in complexity terms, NP-hard --- see \cite{DickinsonGijben2014}).
Finding the cp-rank of a known completely positive matrix is also hard: one CP
factorization does not lead naturally to all others, or to a minimal one. These difficult problems
have gained an increased interest in the last couple of decades, due to
the use of completely positive matrices in optimization. The $n\times n$ completely
positive matrices form a convex cone $\cp_n$ in the space $\sym_n$ of $n\times n$
symmetric matrices. The dual of this cone is the cone $\cop_n$ of $n\times n$ copositive matrices.
(An $n\times n$ matrix $A$ is {\it copositive} if $\vct xA\vc x\ge 0$ for every
nonnegative vector $\vc x\in \R^n$.) It has been shown that a wide family of difficult optimization problems,
both discrete and continuous, may be reformulated as linear optimization problems with
a matrix variable  in either $\cp_n$ or $\cop_n$. In such a formulation, the difficulty
of the problem is shifted to the cone. Hence the increased interest in
the matrices in the cones $\cp_n$ and $\cop_n$, and also
in the structure of these cones.
For details on copositive optimization see \cite{Duer2010} and \cite{Bomze2012}.
 See \cite{BermanShaked2003} for a survey on complete positivity. An updated
and extended book  by the same authors, titled {\it Copositive and Completely Positive Matrices}, is
due to appear soon, containing, as the name suggests, also an up-to-date survey on copositivity.
See \cite{BermanDuerShaked2015} for open problems in this field. Although some work has been
done on these problems since the paper appeared, they are still generally open.

The number of CP factorizations has not
been widely explored so far, but some results do exist. The number of minimal CP factorizations
was considered in \cite{ZhangLi2000}
for completely positive matrices
whose graph is a cycle, in \cite{DickinsonDuer2012} for matrices that have an acyclic or a unicyclic graph,
and in \cite{ShakedBomzeJarreSchachinger2013} for certain positive matrices on the boundary of $\cp_5$.
In \cite{Shaked2013}
completely positive matrices with a chordal graph and minimal possible rank were shown to have
 a unique CP factorization.
Even fewer papers deal with the structure of $\cp_n$. Some geometry results may be found in \cite{Dickinson2011},
and faces of $\cp_5$ were studied in \cite{Zhang2018,Zhang2019}. (The latter two papers contain also  examples
of  matrices in $\cp_5$ that have unique CP factorizations.)

The main result in this paper is a simple necessary and sufficient
condition for a completely positive matrix whose graph is triangle free to
have a unique CP factorization. This result generalizes both the results of \cite{ZhangLi2000}
and of \cite{DickinsonDuer2012} in two ways:
we consider the wider family of matrices with triangle free graphs, and consider also CP factorizations that are not minimal.
We will also show that this result may be applied to find the number of CP factorizations
of additional matrices on the boundary of $\cp_n$  (some of them positive), in particular those
considered in \cite{ShakedBomzeJarreSchachinger2013}.  We conclude with
a description of the minimal face of $\cp_n$ containing a completely
positive $A$ --- both in the general case  and in the special case that $A$ has a unique CP representation, demonstrating
the implication of this uniqueness.

The paper is organized as follows: In the next section we establish mainly notation and terminology. In Section \ref{sec:initial}
we make some initial observations
and state some results regarding the number of CP factorizations of a completely positive matrix.
In Section \ref{sec:main} we state and prove
the main result about matrices with triangle free graphs, and demonstrate
its relevance to other matrices on the boundary.
In Section \ref{sec:faces} we describe the minimal face of $\cp_n$ containing a completely
positive $A$ in the general case, and in the case that $A$ has a unique CP representation.

\section{Notation and terminology}\label{sec:notback}
Vectors are denoted by lower bold case letters, with the $i$-th entry of a vector $\vc x$ denoted by $x_i$.  Matrices are denoted
by capital letters.  In particular, $\vdn e1n$ are
the standard basis vectors in $\R^n$, $\vc 0$ is the zero vector, and $\vc e$ is the vector of
all ones. The $n\times n$ zero matrix  is $0_n$,  and   $E_{ij}$ is a square matrix all of whose entries are zero, except for 1 in the $i,j$ position.
For a vector $\vc x\in \R^n$,  $\supp \vc x=\{1\le i\le n\mid x_i\ne 0\}$ is the {\it support} of $\vc x$.
  We say that $\vc x$ is a {\it nonnegative} ({\it positive}) vector, and write $\vc x\ge \vc 0$ ($\vc x>\vc 0$), if all
 its entries are nonnegative (positive); $\R^n_+=\{\vc x\mid \vc x\ge \vc 0\}$ is the nonnegative orthant of $\R^n$.
 Similarly, we write $A\ge 0$ ($A>0$) and say that $A$ is {\it nonnegative} ({\it positive}) if all the entries of the matrix $A$ are nonnegative (positive).
 A diagonal matrix is called {\it positive} if all its diagonal elements are
 positive. A {\it signature matrix} is a diagonal matrix with 1 and -1 entries on the diagonal.
 If $\sigma\subseteq \{1, \dots, n\}$, we denote by $\vc x[\sigma]$ the restriction of the
 vector $\vc x\in \R^n$ to the indices in $\sigma$, and by $A[\sigma]$ the principal submatrix of an $n\times n$
 matrix $A$ whose rows and columns are indexed by $\sigma$. We denote by $A(\sigma)$ the principal
 submatrix of $A$ on  the rows and columns indexed by the complement of $\sigma$, $A[\sigma|\sigma)$ is
 the submatrix whose rows are indexed by $\sigma$, and its columns by the complement of $\sigma$, and $A(\sigma|\sigma]$
 is submatrix whose rows are indexed by the complement of $\sigma$, and its rows by the complement of $\sigma$.
 If $A[\sigma]$ is nonsingular, the {\it Schur complement} of $A[\sigma]$ in $A$ is defined as
 \[A/A[\sigma]=A(\sigma)-A(\sigma|\sigma]A[\sigma]^{-1}A[\sigma|\sigma).\]
An {\it $M$-matrix} is a square matrix of the form $dI-Q$, where $Q$ is a nonnegative matrix, and $d\ge \rho(Q)$,
the spectral radius of $Q$. The {\it comparison matrix}
of a square matrix $A$, $M(A)$, is defined by
\[M(A)_{ij}=\left\{\begin{array}{rl} |a_{ij}|, \quad & \mbox{if }\, i=j \\
                                     -|a_{ij}|, \quad & \mbox{if }\, i\ne j\end{array}\right. .\]
Some details on these matrix notions may be found in \cite[Chapter 1]{BermanShaked2003}.
A good general reference on matrix theory is \cite{HornJohnson2013}, and \cite{BermanPlemmons1994}
is a recommended reference for $M$-matrices, their many equivalent definitions and their properties. For our
purposes it is important that  a symmetric matrix with nonnegative diagonal elements and nonpositive
off-diagonal elements is an $M$-matrix if and only if it is positive semidefinite, and that an irreducible symmetric
$M$-matrix has a positive eigenvector corresponding to its smallest eigenvalue, which is simple.
This implies that a symmetric $M$-matrix $S$ is positive semidefinite if and only if for some positive diagonal $D$
the matrix $DSD$ is diagonally dominant. Note also that $M(A)$ is diagonally dominant if and only if $A$ is
diagonally dominant, and recall that  an irreducible diagonally dominant matrix is nonsingular if and only if
in at least one row the diagonal dominance is strict.
Regarding the Schur
complement one should keep in mind that if $A$ is positive semidefinite and $A[\sigma]$ is nonsingular,
then $A/A[\sigma]$ is a positive semidefinite matrix and $\rank A/A[\sigma]=\rank A-\rank A[\sigma]$.

We use basic graph theoretic notions, which may be found in standard textbooks on graph theory, see, e.g. \cite{Diestel2018}.
We only consider graphs that are simple (undirected, no loops, no multiple edges).
We denote by $\cc(G)$ the {\it (edge-)clique covering number} of a graph $G$, the minimal number of cliques needed to
cover $G$'s edges. A graph is {\it triangle free} if the largest cliques in the graph are its edges, hence for
a triangle free graph $\cc(G)$ is the number of edges in $G$.
The {\it graph} of a symmetric $n\times n$ matrix $A$, denoted by $G(A)$, has vertex set
$\{1, \dots, n\}$ and $\{i,j\}$ is an edge if and only if $a_{ij}\ne 0$. The matrix $A$ is irreducible if and only
if $G(A)$ is connected.

We will recall known results on completely positive matrices and the cones $\cp_n$ and $\cop_n$
as we go along. When no explicit reference is given, references can be found in \cite{BermanShaked2003}.
For now, we only mention this:
An obvious necessary condition for $A$ to be completely positive is that it is
{\it doubly nonnegative}, i.e., both positive semidefinite and entrywise nonnegative. However, this necessary condition
is not sufficient. A graph $G$ is called {\it completely positive}
if every doubly nonnegative matrix with this graph is completely positive. Completely positive
graphs were fully characterized: these are exactly the graphs that contain no odd cycle
on $5$ vertices or more. In particular, all bipartite graphs are completely positive.

\section{Initial results on the number of CP factorizations}\label{sec:initial}
We first point out that the CP factorization $A=BB^T$, $B\ge 0$, is equivalent to
\[A=\sum_{i=1}^k \vvti bi, \quad \vi bi\ge \vc 0, ~i=1, \dots, k,\]
where $\vdn b1k$ are the columns of $B$. We refer to this sum as a {\it CP representation}, and if the CP factorization is minimal,
as a {\it minimal CP representation}. We use CP factorizations and CP representations
interchangeably.
To avoid artificially extended CP factorizations, we only consider CP factorizations
$A=BB^T$  where in the nonnegative $B$ no column is a scalar multiple of another (that is,
the columns of $B$ are {\it pairwise linearly independent}). Two
CP factorizations $A=BB^T$ and $A=CC^T$ are considered equal if
$C$ and $B$ only differ by the order of their columns (that is,
$C=BP$ for some permutation matrix $P$).

If  $A=A_1\oplus A_2\oplus \cdots \oplus A_m$,
then the number of CP factorizations of $A$ is completely determined by
the number of CP factorizations of the direct summands.
The matrix $A$
has infinitely many CP factorizations if and only if at least for one $i$
the matrix $A_i$ has infinitely many CP factorizations. If $A_i$
has $k_i$ different CP factorizations, then $A$ has $\Pi_{i=1}^mk_i$ different
CP factorizations. We may therefore simplify the discussion by restricting out attention to irreducible
matrices.

As usual when considering completely positive matrices, we rely on
the fact that if $A$ is a symmetric matrix, $P$ is a permutation matrix
and $D$ is a positive diagonal matrix, all of the same order, then
$A$ is completely positive matrix if and only if $PAP^T$ is, if
and only if $DAD$ is. For our purposes it is also important to note that
\[A=BB^T \quad\Leftrightarrow \quad DAD=(DB)(DB)^T,\]
\[A=BB^T \quad\Leftrightarrow \quad PAP^T=(PB)(PB)^T.\]
Since such $P$ and $D$ have nonnegative inverses, these equivalences imply a one-to-one correspondence
between CP factorizations of $A$  and those of $DAD$, or $PAP^T$.
We  may therefore apply diagonal scaling and permutation similarity
to our matrices without changing the number of (minimal) CP factorizations.

With these basic observations in mind, we state some initial results
regarding the number of CP factorizations.

An obvious example of a
completely positive matrix with a unique CP factorization is any rank 1
matrix $\vvt b$, $\vc b\in \R^n_+$. For matrices of rank 2, we recall an idea that dates back to \cite{Hall1998},
and reestablished since then in many papers, including \cite{DickinsonDuer2012}, in several variations.
Since this is a basis for several
more observations, we include a proof. Recall that the cp-rank satisfies $\cpr A\ge \rank A$, and may
be much larger than the rank.
However, equality holds if $\rank A=1$ or $\rank A=2$.

\begin{observation}\label{ob:b1b2}
Let $\vi b1, \vi b2\in \R^n_+$ be linearly independent, and have $\supp \vi b1\supseteq \supp \vi b2$.
Then  $A=\vvti b1+\vvti b2$ has infinitely many minimal CP factorizations.
\end{observation}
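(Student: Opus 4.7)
The plan is to exploit the fact that $\rank A = 2$ and to generate a continuous one-parameter family of CP factorizations by right-multiplying $B = [\vi b1 \mid \vi b2]$ by an orthogonal $2\times 2$ matrix. Since $\vi b1,\vi b2$ are linearly independent, $\rank A = \rank BB^T = 2$, so $\cpr A \ge \rank A = 2$ and the two-term factorization $A = \vvti b1 + \vvti b2$ is already minimal. Any nonnegative factor $C$ with two columns satisfying $CC^T = A$ is therefore automatically a minimal CP factor, so it suffices to produce infinitely many such $C$.

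Concretely, I would set
\[
\vi c1(\theta) = \cos\theta\, \vi b1 - \sin\theta\, \vi b2, \qquad \vi c2(\theta) = \sin\theta\, \vi b1 + \cos\theta\, \vi b2,
\]
so that $C(\theta) = BQ(\theta)$ with $Q(\theta)$ orthogonal and hence $C(\theta)C(\theta)^T = A$ for every $\theta$. Nonnegativity of $\vi c2(\theta)$ is immediate for $\theta\in[0,\pi/2]$. The whole point of the support hypothesis is to ensure nonnegativity of $\vi c1(\theta)$ for small positive $\theta$: if $(b_2)_i = 0$ then $(\vi c1(\theta))_i = \cos\theta\,(b_1)_i \ge 0$, while if $(b_2)_i > 0$ then by $\supp \vi b1 \supseteq \supp \vi b2$ we also have $(b_1)_i > 0$, so the condition $(\vi c1(\theta))_i \ge 0$ reduces to $\tan\theta \le (b_1)_i/(b_2)_i$. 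Taking
\[
\theta_0 = \arctan\!\Bigl(\min_{i\,:\,(b_2)_i > 0} \tfrac{(b_1)_i}{(b_2)_i}\Bigr) > 0,
\]
one gets $\vi c1(\theta), \vi c2(\theta) \ge \vc 0$ for all $\theta\in[0,\theta_0]$. Moreover, since $B$ has rank $2$ and $Q(\theta)$ is nonsingular, the columns $\vi c1(\theta), \vi c2(\theta)$ are linearly independent (in particular, not scalar multiples of one another), so each $\theta$ yields an admissible CP factorization in the sense of the paper's convention.

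It remains to verify that distinct $\theta\in(0,\theta_0]$ produce genuinely different factorizations, i.e., different also up to column permutation. This is where one has to be a bit careful but not much: as $\theta$ increases from $0$, the line spanned by $\vi c1(\theta)$ rotates continuously away from the line spanned by $\vi b1$ inside the plane $\spn\{\vi b1,\vi b2\}$, and the same for $\vi c2(\theta)$, both in the same angular direction, so the unordered pair of rays $\{\R_+\vi c1(\theta), \R_+\vi c2(\theta)\}$ is an injective function of $\theta$ on a sufficiently small interval. Hence the family $\{(\vi c1(\theta), \vi c2(\theta)) : \theta\in(0,\theta_0]\}$ yields uncountably many distinct minimal CP factorizations of $A$.

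The only real obstacle is locating the correct orthogonal transformation: the "opposite" rotation fails because it would require $\vi c2$ to be nonnegative at indices where $(b_2)_i = 0 < (b_1)_i$, which is impossible for $\theta > 0$. The support hypothesis $\supp\vi b1 \supseteq \supp\vi b2$ is precisely what makes the chosen rotation direction work for small positive $\theta$, and this is the conceptual heart of the argument.
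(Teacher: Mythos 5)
Your proof is correct and takes essentially the same route as the paper: both generate the infinite family by right-multiplying $B=[\,\vi b1 \;\ \vi b2\,]$ by $2\times 2$ rotation matrices, with the support hypothesis guaranteeing a positive angular margin so that small rotations in the right direction preserve nonnegativity. Your version just makes the paper's argument more explicit (the formula for $\theta_0$ and the check that distinct angles give factorizations that differ even up to column permutation).
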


\begin{proof}
Let $B=\left[\begin{array}{cc}
               \vi b1 & \vi b2
             \end{array}
 \right]$. Each of the $n$ rows of $B$ is in the nonnegative quadrant of $\R^2$, and has a
 positive first entry. Hence no two rows are orthogonal to each other, and
 there are infinitely many possible rotations  that keep these $n$
 vectors $\R^2_+$, hence infinitely many orthogonal $2\times 2$ matrices $R$
 such that $BR\ge 0$, with different  $R$'s  resulting in different $BR$'s.
\end{proof}

Note that if $A=BB^T$, $B\ge 0$, is a CP factorization, then the support of each column of $B$
is a clique in $G(A)$, and these cliques cover all the edges of $G(A)$. Hence
$\cpr A\ge \cc(G)$.
In \cite[Theorem 1 \& Corollary 1]{Shaked2013} it was shown that if $A$ is a completely positive
matrix with a chordal graph (a graph that has no induced cycle greater than a triangle),
and $\rank A=\cc(G(A))$, then $A$ has a unique CP representation. Combining
this result with Observation \ref{ob:b1b2}, we get the following corollary.

\begin{corollary}\label{cor:rnk 2}
An irreducible  rank $2$ completely positive matrix $A$ has a unique  CP representation if and only if
$A$ has a zero off-diagonal entry in a row and column that are nonzero. If $A$
does not have such a zero off-diagonal entry, then it has infinitely many minimal CP
representations.
\end{corollary}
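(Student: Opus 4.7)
The plan is to fix a minimal CP representation $A=\vvti w1+\vvti w2$ (which exists with exactly two terms because $\cpr A=\rank A=2$), read off the off-diagonal zeros from the support pattern of $\vi w1,\vi w2$, and then for the uniqueness direction, push any competing CP representation into a scalar identity using the linear independence of $\vvti w1$, $\vvti w2$, and $\vi w1\vit w2+\vi w2\vit w1$ in $\sym_n$. Throughout we will use that $A$ irreducible and CP forces $a_{ii}>0$ for every $i$, since $a_{ii}=0$ in a CP matrix zeroes out row $i$.

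For the easy direction (and the final sentence of the corollary), I would argue the contrapositive: if every off-diagonal entry of $A$ is nonzero, then the supports of $\vi w1,\vi w2$ cannot be incomparable, because any $i\in\supp\vi w1\setminus\supp\vi w2$ and $j\in\supp\vi w2\setminus\supp\vi w1$ would give $a_{ij}=w_{1i}w_{1j}+w_{2i}w_{2j}=0$. Hence (relabeling if needed) $\supp\vi w1\supseteq\supp\vi w2$, and Observation~\ref{ob:b1b2} directly delivers infinitely many minimal CP factorizations.

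For the hard direction, suppose $a_{ij}=0$ with $i\ne j$. Since $w_{1i}w_{1j}+w_{2i}w_{2j}=0$ with nonnegative terms, each product vanishes, and the cases in which one of $\vi w1,\vi w2$ is supported on neither $i$ nor $j$ are ruled out by $a_{ii},a_{jj}>0$ and the linear independence of $\vi w1,\vi w2$; this forces, up to swapping, $w_{1i}>0,\ w_{1j}=0,\ w_{2i}=0,\ w_{2j}>0$. Now let $A=\sum_{k=1}^{K}\vi ck\vit ck$ be any CP representation with pairwise linearly independent columns. Each $\vi ck$ lies in $\text{Col}(A)=\text{span}(\vi w1,\vi w2)$, so $\vi ck=\alpha_k\vi w1+\beta_k\vi w2$; evaluating at row $i$ gives $0\le c_{ki}=\alpha_k w_{1i}$, hence $\alpha_k\ge 0$, and similarly at row $j$ gives $\beta_k\ge 0$.

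Expanding and using that $\vvti w1,\vvti w2,\vi w1\vit w2+\vi w2\vit w1$ are linearly independent in $\sym_n$ (which follows from linear independence of $\vi w1,\vi w2$), matching the coefficients in $\sum_k \vi ck\vit ck = \vvti w1+\vvti w2$ yields
\[\sum_{k=1}^{K}\alpha_k^2 = 1,\qquad \sum_{k=1}^{K}\beta_k^2 = 1,\qquad \sum_{k=1}^{K}\alpha_k\beta_k=0.\]
The last equation, combined with nonnegativity of all $\alpha_k,\beta_k$, forces $\alpha_k\beta_k=0$ for every $k$, so every column $\vi ck$ is a nonnegative multiple of $\vi w1$ or of $\vi w2$. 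Pairwise linear independence then allows at most one of each, and the first two equations pin the scalars to $1$, giving $\{\vi c1,\vi c2\}=\{\vi w1,\vi w2\}$. The main obstacle I anticipate is nothing subtle --- the one place that needs care is the case analysis that extracts the support pattern of $\vi w1,\vi w2$ from a single zero off-diagonal entry $a_{ij}$ together with $a_{ii},a_{jj}>0$; everything else is bookkeeping in the three-dimensional symmetric subspace spanned by $\vvti w1,\vvti w2,\vi w1\vit w2+\vi w2\vit w1$.
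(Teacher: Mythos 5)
Your proposal is correct, and the ``no zero off-diagonal entry $\Rightarrow$ infinitely many'' direction is exactly the paper's argument (supports of $\vi w1,\vi w2$ must be nested, then apply Observation~\ref{ob:b1b2}). For the uniqueness direction, however, you take a genuinely different route: the paper reduces the condition to $\cc(G(A))=2$ and then simply cites Theorem~1 and Corollary~1 of \cite{Shaked2013} on completely positive matrices with chordal graph and rank equal to the clique cover number, whereas you give a self-contained linear-algebra argument. Your key steps --- every column $\vi ck$ of a nonnegative factor lies in $\operatorname{Col}(A)=\operatorname{span}(\vi w1,\vi w2)$, the support pattern forced by $a_{ij}=0$ together with $a_{ii},a_{jj}>0$ pins the signs $\alpha_k,\beta_k\ge 0$, and matching coefficients against the linearly independent triple $\vvti w1$, $\vvti w2$, $\vi w1\vit w2+\vi w2\vit w1$ yields $\sum_k\alpha_k\beta_k=0$, hence $\alpha_k\beta_k=0$ for all $k$ --- are all sound, and correctly handle arbitrary (not just minimal) CP representations, which is what the statement requires. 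The trade-off: the paper's proof is shorter and places the corollary inside the general chordal-graph framework of \cite{Shaked2013}, while yours is elementary, avoids the external citation entirely, and makes transparent the mechanism by which a single off-diagonal zero rigidifies the factorization (a mechanism echoed later in the orthogonality argument of Theorem~\ref{thm:trianglefree}(a)$\Rightarrow$(b)). One cosmetic remark: the linear independence of $\vi w1,\vi w2$ is not actually needed to rule out the degenerate support cases in your case analysis --- $a_{ii},a_{jj}>0$ together with $a_{ij}=0$ already suffices --- but this does not affect correctness.
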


\begin{proof}
Since $\cpr A=\rank A=2$, we have that $\cc(G(A))\le 2$. The   matrix $A$
has a zero off-diagonal entry in a row and column that are nonzero if and only if
$\cc(G(A))\ge 2$ (since $A$ is irreducible, each row/column has at least one positive
off-diagonal entry). Hence we need to show that if $\cc(G(A))=2$, $A$ has a
unique CP representation, and if $\cc(G(A))=1$, $A$ has infinitely many CP
representations. The first claim follows from \cite[Theorem 1 \& Corollary 1]{Shaked2013}.

If $\cc(G(A))=1$, in any minimal CP representation $A=\vvti b1+\vvti b2$
the support of one of the vectors  includes the support of the other. (Otherwise
there exist  $i\in \supp \vi b2\setminus \supp \vi b1$,
and   $j\in \supp \vi b1\setminus \supp \vi b2$, and then $a_{ij}=0$
and $\cc(G(A))\ge 2$.)
By Observation \ref{ob:b1b2} such a CP representation yields infinitely many others.
\end{proof}

The interior of $\cp_n$ was described in \cite{DuerStill2008}, and the description was refined in
\cite{Dickinson2010}. The interior consists of nonsingular matrices $A$ that have a CP factorization $A=BB^T$ in which
at least one column of $B$ is positive. From Observation \ref{ob:b1b2} we therefore get the following.

\begin{corollary}\label{cor:int}
Any matrix in the interior of $\cp_n$, $n\ge 2$, has infinitely many CP factorizations.
\end{corollary}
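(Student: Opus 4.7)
I would appeal directly to the characterization of $\mathrm{int}(\cp_n)$ cited in the paragraph preceding the corollary (due to D\"ur--Still and refined by Dickinson): if $A\in\mathrm{int}(\cp_n)$, then $A$ is nonsingular and admits a CP factorization $A=BB^T$ in which at least one column of $B$ is strictly positive. Call this column $\vi b1>\vc 0$. Because $A$ is nonsingular and $n\ge 2$, we have $\cpr A\ge \rank A=n\ge 2$, so $B$ has at least two columns; pick any second column $\vi b2$.

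The strategy is then to isolate the two-column block $[\vi b1\ \vi b2]$ and apply Observation \ref{ob:b1b2} to it. The hypotheses of that observation are met: the paper's standing convention that the columns of $B$ are pairwise linearly independent forces $\vi b1$ and $\vi b2$ to be linearly independent, and $\supp \vi b1=\{1,\ldots,n\}\supseteq \supp \vi b2$ since $\vi b1$ is positive. Observation \ref{ob:b1b2} therefore produces a one-parameter family of nonnegative pairs $(\vi c1,\vi c2)$, coming from orthogonal rotations $R$ with $[\vi b1\ \vi b2]R\ge 0$, each satisfying $\vvti c1+\vvti c2=\vvti b1+\vvti b2$. Substituting such a pair in place of $(\vi b1,\vi b2)$ while keeping the remaining columns of $B$ unchanged gives a valid CP factorization of $A$ for every admissible rotation.

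The only genuine point that needs care is the paper's normalization: two CP factorizations are identified only up to column permutation, and the columns are required to be pairwise linearly independent. Different rotations $R$ produce different $2$-column blocks, but a given rotation might, for instance, make $\vi c1$ proportional to one of the unchanged columns $\vi b3,\vi b4,\ldots$, in which case one would collapse them into a single term; moreover a rotation could in principle map the unordered pair $\{\vi b1,\vi b2\}$ to itself. For each of the finitely many remaining columns this excludes only finitely many rotation angles, and the self-swap excludes one more. Since Observation \ref{ob:b1b2} supplies a whole interval of admissible rotations, the complement of these finitely many bad angles is still infinite, so one obtains infinitely many pairwise distinct CP factorizations of $A$.

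\textbf{Main obstacle.} The substantive work is already packaged in Observation \ref{ob:b1b2}; the only delicate point is the bookkeeping just described to confirm that the continuum of rotations collapses to at most a finite set of identifications under the paper's equivalence of CP factorizations, so that infinitely many truly distinct factorizations survive. Everything else is a direct invocation of the interior characterization and a rank count.
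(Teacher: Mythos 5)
Your proof is correct and follows essentially the same route as the paper: invoke the D\"ur--Still/Dickinson characterization of the interior to get a CP factorization with a positive column $\vi b1$, note that nonsingularity forces a second column, and apply Observation \ref{ob:b1b2} to the pair $(\vi b1,\vi b2)$ to generate infinitely many factorizations. Your extra bookkeeping about excluding finitely many rotation angles (to respect the pairwise-independence convention and the identification up to column permutation) is a sound refinement that the paper leaves implicit.
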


\begin{proof}
Let $A=BB^T$ be a CP factorization of $A$ with a positive column. Without loss of generality, $\vi b1>\vc 0$. By Observation \ref{ob:b1b2},
we may replace columns $\vi b1$ and $\vi b2$ by infinitely many other pairs of columns, to obtain infinitely many CP factorizations of $A$.
\end{proof}

However, in \cite{BomzeDickinsonStill2015} it was shown
that there are matrices in the interior of $\cp_n$ that do not have a {\it minimal} CP factorization with a positive column.
We do not know the answer to the following question:

\begin{question}\label{qst:inter}
Does there exist a matrix $A$ in the interior of $\cp_n$ which has
a unique minimal CP factorization, or finitely many such factorizations?
\end{question}

The main result of this paper generalizes  the result of \cite{ZhangLi2000}, where it was shown
that a completely positive matrix $A$  whose graph is a cycle has a unique minimal CP representation if
$M(A)$ is singular, and two if $M(A)$ is nonsingular. The proof there is by determinant computations.
The result also generalizes the results
of \cite{DickinsonDuer2012}, where the possible values of the number of minimal CP representations
were found in the case that the graph of the matrix is either a tree or unicylic.
The proofs there are by an algorithm that computes
a minimal CP factorization, and  reveals the number of possible outcomes.
Since the proof of the main theorem relies on the result for trees, we provide
here for completeness a direct proof. It uses the same principals as the ones
used in the algorithm of \cite{DickinsonDuer2012}, and in the proof for general chordal graphs
in \cite{Shaked2013}, in particular,  the known fact that an $n\times n$ completely positive
matrix whose graph is a tree has rank at least $n-1$ and cp-rank equal to the rank.

\begin{theorem}\cite{DickinsonDuer2012}\label{th:trees}
Let $A$ be an $n\times n$ completely positive matrix whose graph is a tree, $n\ge 2$.
If $A$ is singular, then $A$ has a unique minimal CP factorization.
If $A$ is nonsingular, then $A$ has infinitely many CP factorizations.
\end{theorem}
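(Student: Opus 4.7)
The plan is to handle the two cases separately, with induction on $n$ driving the singular case and an explicit one-parameter family of factorizations handling the nonsingular case. Both arguments will use the two facts highlighted just before the theorem: an $n\times n$ completely positive matrix whose graph is a tree satisfies $\cpr A=\rank A\ge n-1$, and trees are bipartite (hence completely positive) graphs, so every doubly nonnegative matrix with tree graph lies in $\cp_n$.

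For the singular case I would induct on $n$, with base $n=2$ (a rank-$1$ completely positive matrix factors uniquely as $\vvt b$ with $\vc b\ge\vc 0$ determined by its entries). For the inductive step pick a leaf $v$ of $T$ with unique neighbor $u$ and consider any minimal CP factorization $A=\sum_{i=1}^{n-1}\vc b_i\vc b_i^T$. Each $\supp \vc b_i$ is a clique of a tree, hence a vertex or an edge, and the $n-1$ supports must together cover the $n-1$ edges of $T$; the only way this can happen is if each support is a distinct edge. The column $\vc b_e$ for the leaf edge $e=\{u,v\}$ is then forced, since it alone contributes to $A_{vv}$ and to $A_{uv}$: one gets $(\vc b_e)_v=\sqrt{A_{vv}}$ and $(\vc b_e)_u=A_{uv}/\sqrt{A_{vv}}$. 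Removing $\vc b_e\vc b_e^T$ from $A$ zeroes out the $v$-row and $v$-column; the remaining $(n-1)\times(n-1)$ core is exactly the Schur complement $A/A[\{v\}]$, has tree graph $T-v$, has rank $n-2$ by the Schur-complement rank identity, and is CP (the remaining $n-2$ columns give a factorization). The induction hypothesis applied to this core then transfers uniqueness back to $A$.

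For the nonsingular case the idea is to build an explicit one-parameter family of CP factorizations. Again pick a leaf $v$ with neighbor $u$, and for $s\in(0,\sqrt{A_{vv}}\,]$ set
\[\vc b_e(s)=s\,\vi e v+\frac{A_{uv}}{s}\,\vi e u,\qquad \vc b_v'(s)=\sqrt{A_{vv}-s^2}\,\vi e v.\]
These two rank-one pieces match $A$ in positions $(u,v),(v,u),(v,v)$ and contribute $A_{uv}^2/s^2$ to position $(u,u)$; subtracting them from $A$ annihilates the $v$-row and $v$-column and leaves the $(n-1)\times(n-1)$ matrix $\tilde A(s)=A(\{v\})-(A_{uv}^2/s^2)E_{uu}$ with tree graph $T-v$. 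At $s=\sqrt{A_{vv}}$ this is the Schur complement $A/A[\{v\}]$, which by the rank identity has rank $n-1$ and so is positive definite. For $s$ in an open left-neighborhood of $\sqrt{A_{vv}}$, $\tilde A(s)$ therefore remains positive semidefinite and entrywise nonnegative, hence doubly nonnegative, hence CP (since its graph is a tree). Concatenating any CP factorization of $\tilde A(s)$ with $\vc b_e(s)$ and $\vc b_v'(s)$ yields a CP factorization of $A$ whose edge column has $v$-entry exactly $s$, so distinct values of $s$ give distinct factorizations.

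The main technical obstacle I anticipate is showing that in the nonsingular case $\tilde A(s)$ really is doubly nonnegative on a nontrivial interval of $s$-values, not only at the Schur-complement endpoint. The key observations are that nonsingularity of $A$ forces the Schur complement to be positive definite (not merely PSD), so positive semidefiniteness survives a small perturbation of the $(u,u)$-entry, and that $A_{uu}A_{vv}>A_{uv}^2$ strictly in the nonsingular case (otherwise $\vc x=\sqrt{A_{vv}}\,\vi e u-\sqrt{A_{uu}}\,\vi e v$ would satisfy $\vc x^TA\vc x=0$, forcing rows $u$ and $v$ of $A$ to be proportional), which keeps $A_{uu}-A_{uv}^2/s^2$ strictly positive for $s$ slightly below $\sqrt{A_{vv}}$. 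The remaining bookkeeping — that the pieces correctly reconstruct $A$ and that distinct $s$ yield distinct factorizations — is routine.
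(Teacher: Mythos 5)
Your proof is correct. The singular half is essentially the paper's own argument: induct on $n$, observe that the $n-1$ supports in a minimal factorization must be the $n-1$ distinct edges of the tree, note that the leaf-edge column is forced by the entries $A_{vv}$ and $A_{uv}$, and pass to the Schur complement $A/A[\{v\}]$, a singular completely positive matrix with tree graph $T-v$ to which the induction hypothesis applies. Where you genuinely diverge is the nonsingular half. The paper starts from one minimal CP factorization with $n=\cpr A$ columns, notes that two of the columns must then have nested supports (the tree has only $n-1$ edges), and invokes the rotation argument of Observation \ref{ob:b1b2} to replace that pair in infinitely many ways, keeping the column count at $n$ throughout. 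You instead build an explicit one-parameter family by splitting the leaf diagonal entry $A_{vv}$ between the edge column and a singleton column at $v$, and showing the residual $\tilde A(s)$ stays doubly nonnegative --- hence completely positive, trees being completely positive graphs --- for $s$ in a left-neighborhood of $\sqrt{A_{vv}}$. Your two key points (positive definiteness of the Schur complement is an open condition; $A_{uu}A_{vv}>A_{uv}^2$ strictly for a positive definite $A$) are both sound, so the construction works. The trade-off: your argument is self-contained and does not need the rotation lemma, but away from the endpoint $s=\sqrt{A_{vv}}$ it produces factorizations with $n+1$ columns, so it yields infinitely many CP factorizations but not infinitely many \emph{minimal} ones, whereas the paper's rotation argument delivers the (slightly stronger) minimal version. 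Since the theorem claims only ``infinitely many CP factorizations'' in the nonsingular case, and the later results that need two minimal factorizations derive them independently, this difference is harmless here.
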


\begin{proof}
We prove the result for singular matrices by induction on $n$. For $n=2$, this is simply the
fact that a rank 1 completely positive matrix has a unique CP representation.
Suppose the result holds for $n-1\ge 2$, and let $A$ by an $n\times n$ singular completely positive matrix whose graph is a tree.
Then $\cpr A=\rank A=n-1$, and each minimal CP representation has $n-1$
columns, each supported by an edge of $G(A)$. Without loss of generality
we may assume that $1$ is a pendant vertex in $G(A)$, adjacent only to $2$.
Then
\arraycolsep=3pt
\[\arraycolsep=3pt A=\left[\begin{array}{cc} a_{11}&\!\!\begin{array}{cccc} a_{12}&0&\dots&0\end{array}\\
 \begin{array}{c} a_{12} \\0 \\ \vdots  \\~0\end{array} & A(1) \end{array} \right].\]
In any minimal CP representation of $A$, exactly
one of the vectors, say $\vi b1$, is supported by $\{1, 2\}$. Necessarily
\[\vvti b1=\left[\begin{array}{cc}
           a_{11} & a_{12} \\
           a_{12} & \frac{a_{12}^2}{a_{11}} \\
         \end{array}\right]\oplus 0_{n-2}.\]
Hence
\[A-\vvti b1=0_1\oplus A/A[1]\]
is doubly nonnegative, and its rank is one less than $\rank A$, i.e., $n-2$. The graph
of $A/A[1] $ is the tree obtained from $G(A)$ by deleting the vertex 1 and
the edge incident with it.  As trees are completely positive graphs, $A/A[1]$ is a completely positive
singular matrix, and by the induction hypothesis this matrix has a unique minimal CP represntation as
a sum of $n-2$ rank $1$ matrices. Hence $A-\vvti b1$ has a unique minimal CP representation
\[A-\vvti b1=\sum_{i=2}^n\vvti bi,\]
and $A=\sum_{i=1}^n\vvti bi$ is the only minimal CP representation of $A$.

If $A$ is an $n\times n$ nonsingular matrix whose graph is a tree, then
$\cpr A=n$. It has a minimal CP representation
\[A=\sum_{i=1}^n\vvti bi.\]
In this CP representation, $n-1$ of the vectors $\vdn b1n$  have to be supported by the $n-1$ different edges of
$G(A)$. The support of the $n$-th vector is contained in the support of one of these  $n-1$
vectors. Without loss of generality, we may assume $\supp \vi b2\subseteq \supp \vi b1$.
By Observation \ref{ob:b1b2}, one may obtain infinitely many different CP representation
by replacing $\vvti b1+\vvti b2$.
\end{proof}

Our main result generalizes also the
 results of \cite{ZhangLi2000, DickinsonDuer2012} regarding the number of CP
factorizations of completely matrices whose graph is a cycle or unicylic, except that it
does not give an indication on the maximum number of minimal CP factorizations
of a unicyclic graph. For completeness, we state
and prove here this part. The proof is different from the
original proofs, but like them relies
on the fact that a completely positive matrix whose graph is triangle free and not a tree
has cp-rank equal to the number of edges of the graph.

\begin{theorem}\cite{ZhangLi2000, DickinsonDuer2012}\label{thm:cycle}
Let $A$ be an $n\times n$ irreducible completely positive matrix whose graph contains exactly one cycle, on $k\ge 4$ vertices.
Then $A$ has at most two minimal CP factorizations.
\end{theorem}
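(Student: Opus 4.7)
The plan is to use the fact that $\cpr A$ equals the number of edges of $G(A)$, reduce to the pure cycle by peeling pendant vertices, and then analyse the cycle with a M\"obius recurrence whose closing equation is quadratic.

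First I would note that $G(A)$ is a connected unicyclic graph on $n$ vertices (so with exactly $n$ edges) and is triangle free (since $k\ge 4$). By the fact quoted just before Theorem \ref{thm:cycle} that for a triangle-free non-tree graph the cp-rank of a completely positive matrix equals the number of edges of its graph, $\cpr A = n$. In any minimal CP representation $A=\sum_{i=1}^n \vvti{b}{i}$, each $\vi{b}{i}$ has support a clique of $G(A)$, hence an edge (triangle-freeness). Since $n$ vectors cover $n$ edges, each edge supports exactly one vector, and such a representation is encoded by a positive pair $(\alpha_e,\beta_e)$ attached to every edge $e$.

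Next I would mimic the pendant-peeling used in the proof of Theorem \ref{th:trees}. If $v$ is a pendant vertex of $G(A)$ with unique neighbour $w$ via edge $e$, then the single vector on $e$ is forced to have value $\sqrt{a_{vv}}$ at $v$ (from the diagonal constraint at $v$) and $a_{vw}/\sqrt{a_{vv}}$ at $w$ (from the edge constraint). Subtracting the corresponding rank-$1$ summand yields a completely positive matrix on the remaining $n-1$ vertices whose graph is again unicyclic, since a pendant cannot lie on the cycle. Iterating - each step being uniquely determined - eventually reduces the graph to the bare cycle $C_k$, so the number of minimal CP factorizations of $A$ equals that of the resulting cycle matrix $\widetilde A$.

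For $\widetilde A$ on the cycle $1,\dots,k$, let $\alpha_i$ and $\beta_i$ be the positive entries, at vertices $i$ and $i+1$ (mod $k$), of the vector attached to edge $\{i,i+1\}$. The edge and vertex constraints read $\alpha_i\beta_i=\widetilde a_{i,i+1}$ and $\beta_{i-1}^2+\alpha_i^2=\widetilde a_{ii}$, and setting $x_i=\alpha_i^2$ combines them into the recurrence
\[
x_i \;=\; \widetilde a_{ii}-\frac{\widetilde a_{i-1,i}^{\,2}}{x_{i-1}} \;=\; F_i(x_{i-1}),
\]
where $F_i$ is a M\"obius transformation represented by the matrix $\left(\begin{smallmatrix}\widetilde a_{ii} & -\widetilde a_{i-1,i}^{\,2}\\ 1 & 0\end{smallmatrix}\right)$. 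Going once around the cycle gives $x_1=\Phi(x_1)$ with $\Phi=F_1\circ F_k\circ\cdots\circ F_2$, again M\"obius; clearing denominators turns this into a quadratic equation in $x_1$, so $x_1$ (and thereby the entire factorization) takes at most two positive values, yielding the claimed bound.

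The main obstacle is the degenerate possibility that $\Phi$ is the identity M\"obius transformation, which would make the closing equation vacuous and in principle allow a one-parameter family of factorizations. I would rule this out by analysing the product of the $2\times 2$ matrices representing the $F_i$: demanding that this product be a scalar multiple of the identity forces tight arithmetic relations among the entries $\widetilde a_{ii}$ and $\widetilde a_{i-1,i}$, and combining these with the positive semidefiniteness of $\widetilde A$ - equivalently, with the fact that the comparison matrix $M(\widetilde A)$ is a positive semidefinite $M$-matrix diagonally scalable to a diagonally dominant form - should produce a contradiction, so the identity case cannot actually occur for a genuine completely positive cycle matrix.
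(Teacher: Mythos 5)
Your pendant-peeling reduction and the transfer-matrix setup for the bare cycle are sound, and the latter is a genuinely different route from the paper's: the paper fixes the summand on one edge of the cycle with a free parameter $t>0$, observes that $A-\vvti b1$ then has a path (hence tree) graph and so admits a unique CP representation precisely when it is doubly nonnegative and singular, and turns the singularity condition, via the Schur complement $A'/A(1,2)$, into a quadratic in $t$ whose leading coefficient cannot vanish because it is (up to sign) a diagonal entry of a positive semidefinite $2\times 2$ Schur complement that must be positive. That computation never degenerates. Your M\"obius formulation, by contrast, stands or falls on excluding $\Phi=\mathrm{id}$, and that is exactly where the proposal stops: you assert that positive semidefiniteness ``should produce a contradiction'' without producing it. This is a genuine gap, not a formality. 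Products of matrices $\bigl(\begin{smallmatrix} a_i & -b_i\\ 1 & 0\end{smallmatrix}\bigr)$ with all $a_i,b_i>0$ \emph{can} be a scalar multiple of the identity: for $k=4$ take every $a_i=\sqrt2$ and $b_i=1$; the single factor has eigenvalues $e^{\pm i\pi/4}$ (up to modulus) and its fourth power is $-I$, so $\Phi=\mathrm{id}$. Positivity of the data alone therefore does not rule out the degenerate case; some actual consequence of complete positivity has to be invoked, and you have not yet invoked one.

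The ingredient you gesture at does close the gap, so the argument is completable. Since $M(\widetilde A)$ is an irreducible symmetric positive semidefinite $M$-matrix, after a positive diagonal scaling (which changes neither the number of factorizations nor the shape of the recurrence) it is diagonally dominant, i.e.\ $\widetilde a_{ii}\ge \widetilde a_{i-1,i}+\widetilde a_{i,i+1}$ for all $i$ modulo $k$. Then $x_{i-1}\ge \widetilde a_{i-1,i}$ implies $\widetilde a_{i,i+1}\le F_i(x_{i-1})<\widetilde a_{ii}$, so each $F_i$ maps the half-line $[\widetilde a_{i-1,i},\infty)$ into the bounded interval $[\widetilde a_{i,i+1},\widetilde a_{ii})$; composing around the cycle, $\Phi$ maps an unbounded set into a bounded one and hence is not the identity, so clearing denominators in $x_1=\Phi(x_1)$ gives a nontrivial equation of degree at most two, with at most two positive roots. (In the $k=4$ example above the data $\widetilde a_{ii}=\sqrt2$, $\widetilde a_{i,i+1}=1$ indeed violates this: $\widetilde A$ is not even positive semidefinite.) With that paragraph supplied your proof is correct; as written, its decisive step is only conjectured, whereas the paper's Schur-complement computation sidesteps the degeneracy altogether.
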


\begin{proof}
First suppose $G(A)$ is a cycle.
By considering an appropriate permutation similarity, we assume that
\[A=
\left[\begin{array}{cccccc}d_1&h_1&0 &\ldots&0&h_n \\
h_1&d_2&h_2&\ddots&&0\\ 0&h_2&d_3&h_3&\ddots&\vdots \\
\vdots&\ddots &\ddots &\ddots &\ddots&0\\
0&&\ddots&h_{n-2}&d_{n-1}&h_{n-1}\\
h_n&0&\ldots&0&h_{n-1}&d_{n}
\end{array}\right].
\]
where $h_i$ and $d_i$ are positive, $i=1, \dots, n$. Each minimal CP representation of $A$
consists of $n$ summands $\vvti bi$, with each $\vi bi$ supported by one
of the $n$ edges of the graph. Without loss of generality,
\[\vvti b1=\left[
             \begin{array}{cc}
               t & h_1 \\
               h_1 & \frac{h_1^2}{t} \\
             \end{array}
           \right]\oplus 0_{n-2},\quad t>0.
\]
Then
\[A-\vvti b1=
\left[\begin{array}{cccccc}d_1-t&0&0 &\ldots&0&h_n \\
0&d_2-\frac{h_1^2}{t}&h_2&\ddots&&0\\ 0&h_2&d_3&h_3&\ddots&\vdots \\
\vdots&\ddots &\ddots &\ddots &\ddots&0\\
0&&\ddots&h_{n-2}&d_{n-1}&h_{n-1}\\
h_n&0&\ldots&0&h_{n-1}&d_{n}
\end{array}\right]\]
is a completely positive matrix which has a CP representation with $n-1$ summands,
and its graph is a path on $n$ vertices. In particular, \[\rank (A-\vvti b1\,)=\cpr (A-\vvti b1\,)=n-1.\]
By the previous result, $A-\vvti b1$ has a unique CP representation. Hence the number of
rank one representations of $A$ is equal to the number of positive $t$'s for which $A-\vvti b1$ is
doubly nonnegative and singular.
The last $n-2$ columns in the positive semidefinite $A'=A-\vvti b1$
are linearly independent
by their sign pattern, hence the principal submatrix $A'(1,2)=A(1,2)$ is nonsingular.
As   $\rank A'=n-1$, the matrix  $A'/A(1,2)$ is positive semidefinite and has rank $1$.
This latter matrix has the form
\[A'/A(1,2)=\left(
         \begin{array}{cc}
           a-t & b \\
           b & c-\frac{h_1^2}{t} \\
         \end{array}
       \right),
\]
for some real numbers $a, b, c$. Hence $\det(A'/A(1,2))=0$ translates into a quadratic equation in $t$,
that has at most two positive solutions.

The result for a unicylic graph may be deduced by induction on $n-k$, by removing at the induction
step one vector from the minimal CP representation, supported by a pendant edge, as in the previous proof.
\end{proof}

\section{Completely positive matrices with a triangle free graph}\label{sec:main}

In this section we prove our main result. We begin with some background material to put it in context.
In \cite{DrewJohnsonLoewy1994} a simple non-combinatorial sufficient condition for complete positivity was proved:
If $A$ is a symmetric nonnegative matrix whose comparison matrix $M(A)$ is positive semidefinite,
then $A$ is completely positive. In the case that the graph of $A$ is triangle free, this sufficient
condition is also necessary.  In \cite{BermanShaked1998} a simple alternative proof for this latter
theorem was suggested.
We describe it here, since this idea will be used in the next proof.
If $G(A)$ is triangle free, then the support of every column in a CP factorization of $A$
is contained in one of the edges of $G(A)$.
For a nonnegative vector $\vc b$  with $|\supp \vc b|=2$, we have $M(\vvt b\,)=\vvt d$, where $\vc d$ is
obtained from $\vc b$ by reversing the sign of exactly one nonzero element in $\vc b$.
Therefore if $A=\sum_{i=1}^k \vvti bi$, where for each $i$ the support of $\vi bi$ is of size at most $2$, we have
$M(A)=\sum_{i=1}^k \vvti di$, where $\vvti di=M(\vvti bi\,)$ is a rank one positive semidefinite matrix,
with $\supp \vi di=\supp \vi bi$, $|\vi di|=\vi bi$.
(If $|\supp \vi bi|=1$ simply take $\vi di=\vi bi$.) Hence $M(A)$ is positive semidefinite.

Our main result concerns exactly these completely positive matrices with triangle free graph.

\begin{theorem} \label{thm:trianglefree}
Let $A$ be an $n\times n$ irreducible completely positive matrix, $n\ge 2$, whose graph is triangle free.
Then the following are equivalent
\begin{enumerate}
\item[{\rm(a)}] $M(A)$ is singular.
\item[{\rm(b)}] $A$ has a unique CP factorization.
\item[{\rm(c)}] $A$ has a unique minimal CP factorization.
\end{enumerate}
\end{theorem}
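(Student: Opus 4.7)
My plan is to establish the three implications (a) $\Rightarrow$ (b), (b) $\Rightarrow$ (c), and (c) $\Rightarrow$ (a) in turn. The first will exploit the sign-flip construction recalled just before the statement; the second is essentially definitional; and the third, which is the real work, I would handle contrapositively using a case split on whether $G(A)$ is a tree.

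For (a) $\Rightarrow$ (b), I would begin from an arbitrary CP factorization $A=\sum_i \vvti bi$. Because $G(A)$ is triangle free, each column $\vi bi$ is supported on an edge or a single vertex, and the sign-flip construction gives $M(A)=\sum_i \vvti di$, where $\vi di$ coincides with $\vi bi$ on single-vertex supports and, for an edge support $\{u,v\}$, has components $(b_i)_u$ and $-(b_i)_v$. Since $A$ is irreducible so is $M(A)$, and its singularity together with being PSD (by the triangle-free case of the Berman-Shaked result) produces a positive null vector $\vc x>\vc 0$ with $M(A)\vc x=\vc 0$. The identity $0=\vct x M(A)\vc x=\sum_i(\vct x\vi di)^2$ then forces $\vct x\vi di=0$ for every $i$; this cannot hold for a single-vertex $\vi di$, so every $\vi bi$ has edge support, and for an edge $\{u,v\}$ the equation $x_u(b_i)_u=x_v(b_i)_v$ fixes the ratio of the two entries. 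Pairwise linear independence of the columns then forces at most one column per edge, and the obligation to realize the (nonzero) off-diagonal entries forces exactly one; the scale is pinned down by $a_{uv}$, and the diagonal equations follow automatically from $M(A)\vc x=\vc 0$.

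For (b) $\Rightarrow$ (c), a minimal CP factorization always exists and is itself a CP factorization, so uniqueness of CP factorizations forces it to be the unique minimal one. For (c) $\Rightarrow$ (a), I would work contrapositively: assume $M(A)$ is nonsingular (so positive definite, being a PSD $M$-matrix) and produce two distinct minimal CP factorizations. When $G(A)$ is a tree it is bipartite, so for the bipartition signature $S$ one has $M(A)=SAS$ and hence $\rank A=\rank M(A)=n$; the nonsingular case of Theorem~\ref{th:trees} then supplies infinitely many minimal CP factorizations via Observation~\ref{ob:b1b2} applied to a pair of columns with nested supports. When $G(A)$ is not a tree, it contains an induced cycle $C$ of length $k\ge 4$, and any minimal CP factorization $A=\sum_e\vvti be$ has exactly one column per edge since $\cpr A=|E|$; freezing the non-cycle columns and letting the cycle-edge columns vary, the remaining equations are precisely the cycle system analyzed in the proof of Theorem~\ref{thm:cycle}, which has at most two positive solutions and (by the Zhang-Li analysis) has exactly two whenever a reduced $M$-matrix on the cycle vertices---with diagonal adjusted by the frozen non-cycle contributions---is nonsingular. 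The main obstacle I foresee is exactly this last step: showing that nonsingularity of $M(A)$ propagates to nonsingularity of this Schur-complement-style reduced $M$-matrix for at least one cycle in $G(A)$. I expect this to require choosing both the cycle and the base factorization strategically, and tracking carefully how the frozen diagonal contributions interact with the Perron eigenstructure of $M(A)$, so that the reduced quadratic for the cycle recursion genuinely admits a second positive root.
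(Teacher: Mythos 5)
Your implications (a)~$\Rightarrow$~(b) and (b)~$\Rightarrow$~(c) are correct and essentially coincide with the paper's argument: the paper likewise writes $M(A)=\sum_i\vvti di$, takes the positive null vector $\vc v$ of the singular irreducible $M$-matrix $M(A)$, and deduces $\vit di\vc v=0$ (via the column space of $M(A)$ rather than your identity $0=\vct vM(A)\vc v=\sum_i(\vit di\vc v)^2$, which works just as well and is arguably more self-contained), after which each column is pinned down by its edge, the fixed ratio, and the off-diagonal entry exactly as you describe. The tree case of (c)~$\Rightarrow$~(a) is also fine as stated.

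The genuine gap is the one you flag yourself: in the non-tree case of (c)~$\Rightarrow$~(a) you never actually produce a second minimal CP factorization. Freezing the non-cycle columns of one minimal factorization and re-solving the residual cycle system yields a second solution only if the comparison matrix of the residual matrix on the cycle vertices is nonsingular, and nonsingularity of $M(A)$ does not obviously transfer to that residual; you concede you do not know how to choose the cycle and the base factorization so as to guarantee this, and without that step the implication is not established even for unicyclic graphs (the frozen tree part may well make the cycle residual's comparison matrix singular). The paper sidesteps the cycle analysis entirely. Since $M(A)$ is a positive semidefinite irreducible $M$-matrix, a positive diagonal congruence (which preserves the number of CP factorizations) makes it diagonally dominant, and then nonsingularity is equivalent to strict dominance in at least one row. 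Picking a non-bridge edge $\{u,v\}$ and subtracting the rank-one matrix $|F|=a_{uv}(\vi eu+\vi ev)(\vi eu+\vi ev)^T$ changes the comparison matrix by $-a_{uv}(\vi eu-\vi ev)(\vi eu-\vi ev)^T$, which has zero row sums; hence $M(A-|F|)=M(A)-F$ is still diagonally dominant with the same row sums, still nonsingular, and its connected triangle-free graph has one fewer edge, so $A-|F|$ is completely positive and one can induct on the number of edges down to the tree case, where the two factorizations $A=\delta_1E_{11}+\cdots$ and $A=\delta_2E_{22}+\cdots$ with $\delta_i=\det A/\det A(i)$ are exhibited explicitly. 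This ``remove one edge while preserving diagonal dominance'' device is the idea your plan is missing; some substitute for it (or a proof that a suitable cycle and base factorization always exist) is needed before (c)~$\Rightarrow$~(a) can be considered proved.
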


\begin{remark}\label{rem:bipartite}
{\rm If the triangle free  $G(A)$ is bipartite, then $A$ is singular if and only if $M(A)$ is singular:
Let $\alpha_1$ and $\alpha_2$ be the independent sets of vertices in $G(A)$ such that
every edge has one end in $\alpha_1$ and the other in $\alpha_2$. Then
$A=SM(A)S$, where $S$ is the signature matrix with
\[s_{ii}=\left\{\begin{array}{rl}1,\quad &i\in \alpha_1\\
-1,\quad&i\in \alpha_2   \end{array}\right. .\]
In particular, in the case that $G(A)$ is a tree, (a)~$\Rightarrow$~(c) of Theorem \ref{thm:trianglefree}
is the first part of Theorem \ref{th:trees}.

Also, if $G(A)$ is bipartite and  $A$ is a nonsingular, then for every $i$
there exists $\delta>0$ such that $A-\delta E_{ii}$ is a singular completely positive matrix. To see that,
assume for convenience that $i=n$. Then
\[\det (A-\delta E_{nn})=\det A-\delta \det A(n).\]
As $A(n)$ is nonsingular we get that for $\delta =\det A/\det A(n)$ the matrix $A'=A-\delta E_{nn}$ is singular. It is
positive semidefinite, since it has a nested sequence of $n$ leading principal minors, all positive except for the
determinant of the matrix itself, which is zero. Hence also the $n$-th diagonal element of $A'$ is nonnegative,
and $A'$ is a doubly nonnegative matrix, and its graph is   $G(A)$.
}
\end{remark}

Both  observations in Remark \ref{rem:bipartite} will be used in the proof of Theorem \ref{thm:trianglefree}.

\begin{proof}[Proof of Theorem \ref{thm:trianglefree}]
Only (a)~$\Rightarrow$~(b) and (c)~$\Rightarrow$~(a) need to be proved.

(a)~$\Rightarrow$~(b): Let
$A=\sum_{i=1}^k \vvti bi$ be any CP representation of $A$. The support of each $\vi bi$ is contained in an edge of $G(A)$, hence $1\le|\supp \vi bi|\le 2$ for every $i$.
Then, as explained above,
\[M(A)=\sum_{i=1}^k M\left(\vvti bi\,\right),\]
where $ M\left(\vvti bi\right)=\vvti di$ is a rank one positive semidefinite matrix,
with $\supp \vi di=\supp \vi bi$, $|\vi di|=\vi bi$.

The irreducible $M$-matrix $M(A)$ is singular, hence it has a positive
eigenvector $\vc v$ corresponding to the eigenvalue zero.
Then from $M(A)=\sum_{i=1}^k\vvti di$,  we get that each $\vi di $ is in the column space of $M(A)$, and is therefore orthogonal to
$\vc v$. Since $1\le |\supp\vi di|\le 2$, this forces $\sigma_i=\supp\vi di$ to have size $2$, and hence
$\vi di[\sigma_i]$ spans the orthogonal complement  $\{\vc v[\sigma_i]\}^\bot$ in $\bR^2$. As the  vectors $\vdn b1k$ are pairwise
linearly independent, so are $\vdn d1k$. Combined with the fact that
$\{\vc v[\sigma_i]\}^\bot$  is one dimensional,
this implies that $\supp \vi dj\ne \sigma_i$ for every $j\ne i$.
Therefore if $\sigma_i=\{p,q\}$, we have
$\left(\vvti di \,\right)_{pq}=(M(A))_{pq}$. This equality, together with $\vit di\vc v=0$,
completely determines $\vi di$  up to sign, and hence also determines  $\vi bi=|\vi di|$. This shows that the representation
 $A=\sum_{i=1}^k \vvti bi$ is unique (and necessarily $k=|E(G(A))|$).

(c)~$\Rightarrow$~(a): We will show that if $M(A)$ is nonsingular, then
$A$ has at least two minimal CP factorizations.
As the number of CP factorizations is   preserved by positive diagonal congruence, it suffices to prove
the following claim:
If $M(A)$ is diagonally dominant and nonsingular,
then $A$ has at least two different minimal CP representations. We prove the claim by
induction on the number $m$ of edges of the connected  $G(A)$. Recall that
the number of summands in a minimal CP representation of such $A$ is $m$.

Consider first the case that $G(A)$ is a tree, $m=n-1$. As $M(A)$ is a nonsingular diagonally dominant matrix,   $A$ is also nonsingular
by Remark \ref{rem:bipartite}. Since $G(A)$ is a tree, $\cpr A=\rank A$,
and any minimal CP representation of $A$ has $n$ summands.
We can generate two different minimal CP representations as follows.
Choose $\delta_1>0$ such that $A-\delta_1 E_{11}$ is positive semidefinite and singular, and
$\delta_2>0$ such that $A-\delta_2 E_{22}$ is positive semidefinite and singular ($\delta_i=\det(A)/\det (A(i))$, $i=1,2$). Both matrices
are doubly nonnegative, with the same tree graph as $A$, and are therefore completely positive.
By  Remark \ref{rem:bipartite}, both their comparison matrices are also singular.
As (a) implies (b), there exist unique CP representations
\[A-\delta_1 E_{11}=\sum_{i=1}^{n-1} \vvti bi
~~\text{ and }~~
 A-\delta_2 E_{22}=\sum_{i=1}^{n-1}\vvti ci,\]
where for each $i$ the nonnegative vectors $\vi bi $ and $\vi ci$ are supported by the same edge of the tree.
Then
\[A=\delta_1 E_{11}+\sum_{i=1}^{n-1} \vvti bi ~~\text{ and }~~  A=\delta_2 E_{22}+\sum_{i=1}^{n-1} \vvti ci \]
are two  minimal rank $1$ representations of $A$ that are clearly different (by the only vector in each representation
that is supported by a single vertex).

Now suppose the claim holds for $m\ge n-1$, and let $A$ be an
irreducible completely positive matrix with a nonsingular diagonally dominant comparison matrix, whose triangle free graph $G(A)$ has $m+1$ edges.
As $M(A)$ is nonsingular, there exists $1\le i\le n$ such that in row $i$ there is strict diagonal dominance, i.e.,
$(M(A)\vc e)_i>0$.
As $G(A)$ is not a tree,
there exists an edge whose removal from $G(A)$ will keep the graph connected.
Without loss of generality  $\{1,2\}$ is such an edge. Let $F=\left[
                                                          \begin{array}{rr}
                                                            a_{12} & -a_{12} \\
                                                            -a_{12} & a_{12} \\
                                                          \end{array}
                                                        \right]\oplus 0_{n-2}$.
Since $F\vc e=\vc 0$,  we have $(M(A)-F)\vc e=M(A)\vc e\ge \vc 0$ and
\[((M(A)-F)\vc e)_i=(M(A)\vc e)_i>0.\]
Hence $M(A)-F=M(A-|F|)$ is a nonsingular diagonally dominant matrix whose graph has $m$ edges. By the induction hypothesis, $A-|F|$ has at least
two different minimal CP representations,
\[A-|F|=\sum_{i=1}^{m-1} \vvti bi ~~\text{ and }~~
A-|F|=\sum_{i=1}^{m-1} \vvti ci.\]
Then
\[A=|F|+\sum_{i=1}^{m-1} \vvti bi ~~\text{ and }~~  A= |F|+\sum_{i=1}^{m-1} \vvti ci \]
are two different minimal CP representations of $A$.
\end{proof}

In the case that $G(A)$ is a tree,  $A$ has infinitely many
minimal CP factorizations by Theorem \ref{th:trees}. The following corollary extends this to a bipartite $G(A)$.

\begin{corollary}\label{cor:bipartite}
Let $A$ be an $n\times n$ irreducible completely positive matrix, whose graph is bipartite. Then
\begin{enumerate}
\item[{\rm(a)}] If $A$ is singular, it has a unique CP factorization.
\item[{\rm(b)}] If $A$ is nonsingular, it has at least two
minimal CP factorizations, and infinitely many CP factorizations.
\end{enumerate}
 \end{corollary}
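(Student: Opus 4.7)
The plan is to combine Theorem~\ref{thm:trianglefree} with the two facts collected in Remark~\ref{rem:bipartite}, noting that a bipartite graph is triangle free so Theorem~\ref{thm:trianglefree} applies.

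For part~(a), the identity $A = SM(A)S$ from Remark~\ref{rem:bipartite}, with $S$ a signature matrix, shows that $A$ and $M(A)$ have the same rank. Hence $A$ singular forces $M(A)$ singular, and the implication (a)$\Rightarrow$(b) of Theorem~\ref{thm:trianglefree} gives $A$ a unique CP factorization.

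For part~(b), the same identity forces $M(A)$ nonsingular, so the contrapositive of (c)$\Rightarrow$(a) of Theorem~\ref{thm:trianglefree} rules out a unique minimal CP factorization; since at least one exists there must be at least two. To upgrade this to infinitely many (possibly non-minimal) CP factorizations I would use the second half of Remark~\ref{rem:bipartite}: fix any index $i$ and choose $\delta>0$ so that $A' = A - \delta E_{ii}$ is singular and doubly nonnegative. Its graph is still the bipartite $G(A)$, hence $A'$ is completely positive; being singular and bipartite, part~(a) assigns it a unique CP representation $A' = \sum_j \vvti c j$, so
\[
A = \delta\, \vvti{e}{i} + \sum_j \vvti c j
\]
is a CP representation of $A$ containing a summand supported at the single vertex $\{i\}$. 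Irreducibility of $A$ yields an edge $\{i,k\}$ of $G(A)$; triangle freeness of $G(A)$ constrains each $\supp \vi c j$ to have size at most two, and the nonzero $(i,k)$ entry of $A'$ forces some $\vi c j$ to be supported exactly on $\{i,k\}$. Then $\vi c j$ and $\sqrt{\delta}\,\vi{e}{i}$ are linearly independent with nested supports $\{i\}\subseteq\{i,k\}$, so Observation~\ref{ob:b1b2} produces infinitely many replacements of the pair $\vvti c j + \delta\, \vvti{e}{i}$, each giving a distinct CP factorization of $A$ when the remaining columns are kept fixed.

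The only nuisance I foresee is the standing convention that columns of $B$ be pairwise linearly independent: for at most finitely many of the rotation angles supplied by Observation~\ref{ob:b1b2}, one of the new columns could happen to be parallel to some other $\vi c \ell$ in the fixed CP representation of $A'$. Excluding these finitely many exceptions leaves infinitely many valid CP factorizations of $A$, as required.
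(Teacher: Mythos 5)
Your proposal is correct and follows essentially the same route as the paper: deduce (a) and the first half of (b) from the equivalence of singularity of $A$ and $M(A)$ via $A=SM(A)S$ together with Theorem~\ref{thm:trianglefree}, then obtain infinitely many CP factorizations by subtracting $\delta E_{ii}$ to reach a singular bipartite matrix with a unique CP representation and applying Observation~\ref{ob:b1b2} to the pair $\sqrt{\delta}\,\vi ei$, $\vi cj$ with nested supports. Your extra remarks (why a column supported on an edge containing $i$ must exist, and discarding the finitely many rotations violating pairwise linear independence) only make explicit what the paper leaves implicit.
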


\begin{proof}
By Remark \ref{rem:bipartite}, in this case $M(A)$ is singular if and only if $A$ is.
Therefore (a), and the first part of (b), follow  from Theorem \ref{thm:trianglefree}.

If $A$ is nonsingular, we may choose $\delta>0$
such that $A'=A-\delta E_{11}$ is singular and positive semidefinite. Then $A'$ is doubly nonnegative
with $G(A')=G(A)$ bipartite, and therefore $A'$ is completely positive and singular.
By part (a), $A'$ has a unique CP representation
\[A'=\sum_{i=1}^m \vvti bi,\]
where each $\vi bi$ is supported by an edge in $G(A')$.
Without loss of generality, $1\in \supp \vi b1$. Then
\[A=\delta E_{11}+\vvti b1+\sum_{i=2}^k\vvti bi.\]
As $\delta E_{11}+\vvti b1=(\sqrt{\delta}\vi e1)(\sqrt{\delta}\vi e1)^T+\vvti b1$, and $\supp \sqrt{\delta}\vi e1\subseteq \supp \vi b1$,
the result follows from Observation \ref{ob:b1b2}.
\end{proof}

In the case that $G(A)$ is an $n$-cycle, $n\ge 4$, by Theorem \ref{thm:cycle}
the completely positive matrix $A$ has either one or two minimal CP factorizations.
Theorem \ref{thm:trianglefree} gives us an easy way to determine how many minimal
CP factorizations $A$ has. In the next corollary we state the result
in terms of a given CP factorization of the cyclic $A$.

\begin{corollary}\label{cor:cycle}
Let $A$ be a completely positive matrix whose graph is the $n$-cycle $1-2-\cdots-n-1$.
Then the CP factorization $A=BB^T$, where
\begin{equation}B=\left(
      \begin{array}{ccccc}
        s_1 & 0 & 0 & \dots & t_n \\
        t_1 & s_2 & 0 &   & \vdots \\
        0 & t_2 & \ddots &  &   \\
        \vdots & 0 & \ddots & s_{n-1} & 0 \\
        0 & 0 & 0 & t_{n-1} & s_n \\
      \end{array}
    \right), \quad s_i,t_i>0,~i=1, \dots, 5, \label{eq:Bcyclic}
\end{equation}
is the unique CP factorization of $A$ if  and only if
$\Pi_{i=1}^n s_i=\Pi_{i=1}^n t_i$.
\end{corollary}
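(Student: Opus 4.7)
The plan is to apply Theorem~\ref{thm:trianglefree}: the $n$-cycle is connected (so $A$ is irreducible) and, for $n\ge 4$, triangle free, so the CP factorization $A=BB^T$ is the unique one if and only if $M(A)$ is singular. The whole task therefore reduces to expressing the singularity of $M(A)$ in terms of the $s_i$'s and $t_i$'s.

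To do this, I would first build an explicit square root of $M(A)$, mimicking the construction used for (a)$\Rightarrow$(b) in Theorem~\ref{thm:trianglefree}. Each column $\vi bi$ of $B$ is supported on a single edge of the cycle, and flipping the sign of exactly one of its two nonzero entries produces a vector $\vi di$ with $\vvti di = M\!\left(\vvti bi\right)$. Letting $\tilde B$ denote the matrix obtained from $B$ by negating every $t_i$ and leaving every $s_i$ unchanged, one then has $M(A) = \sum_{i=1}^n \vvti di = \tilde B\tilde B^T$, hence $\det M(A) = (\det \tilde B)^2$.

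Next, I would compute $\det \tilde B$ by cofactor expansion along the first row, whose only nonzero entries are $s_1$ in column $1$ and $-t_n$ in column $n$. The $(1,1)$-minor is lower triangular with diagonal $s_2,\dots,s_n$, contributing $\prod_{i=2}^n s_i$, and the $(1,n)$-minor is upper bidiagonal with diagonal $-t_1,\dots,-t_{n-1}$, contributing $(-1)^{n-1}\prod_{i=1}^{n-1} t_i$. A short sign check then yields $\det \tilde B = \prod_{i=1}^n s_i - \prod_{i=1}^n t_i$, so that $\det M(A) = \left(\prod_{i=1}^n s_i - \prod_{i=1}^n t_i\right)^{2}$, which vanishes exactly under the stated condition, and Theorem~\ref{thm:trianglefree} then gives the claimed equivalence. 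The only real place where care is needed is the sign bookkeeping — the cofactor sign $(-1)^{1+n}$, the $-1$ hidden in the entry $-t_n$, and the $(-1)^{n-1}$ from the minor must combine, for both parities of $n$, to produce exactly $-\prod t_i$ — but this is purely mechanical, and I do not expect any genuine difficulty beyond it.
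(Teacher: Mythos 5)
Your proposal is correct and is essentially the paper's own argument: the paper likewise invokes Theorem~\ref{thm:trianglefree} and observes that $M(A)=SS^T$ with $S$ obtained from $B$ by negating each $t_i$, so that $\det M(A)=\left(\prod_{i=1}^n s_i-\prod_{i=1}^n t_i\right)^2$ for either parity of $n$. Your cofactor-expansion sign check is the only detail the paper leaves implicit, and you carry it out correctly.
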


\begin{proof}
We have $M(A)=SS^T$, where $S$ is obtained from $B$ by replacing each $t_i$ by $-t_i$.
Therefore $\det M(A)=\left(\Pi_{i=1}^n s_i-\Pi_{i=1}^n t_i\right)^2$ (whether $n$ is even or odd!),
and $M(A)$ is singular if and only if $\Pi_{i=1}^n s_i=\Pi_{i=1}^n t_i$.
\end{proof}

Although Theorem \ref{thm:trianglefree} holds for matrices whose
graph is triangle free,  in some cases it has implications on the number of
(minimal) CP factorizations of other completely positive matrices that lie on
the boundary of the cone $\cp_n$. Getting into that requires a brief
reminder regarding  $\cp_n$, its dual $\cop_n$
and the matrices on their boundaries.
The inner product in $\sym_n$ is $\langle A, B\rangle=\trace(AB)$. The mutual duality of the cones $\cp_n$
and $\cop_n$ is with respect to this inner product:
\[\cp_n^*=\left\{X\in \sym_n\mid \langle X, Y\rangle \ge 0 \text{ for every } Y\in \cp_n\right\}=\cop_n,\]
and
\[\cop_n^*=\left\{X\in \sym_n\mid \langle X, Y\rangle \ge 0 \text{ for every } Y\in \cop_n\right\}=\cp_n.\]
In particular, matrices on the boundary of $\cp_n$ are orthogonal to matrices on the boundary of $\cop_n$.
So if $M$ is on the boundary of $\cop_n$, there exists a nonzero nonnegative vector $\vc b$ such that
\[ \vct bM\vc b=\langle M, \vvt b\,\rangle=0.\]
Such $\vc b$ is called a {\it zero} of $M$, and if $\supp \vc b$ does not strictly contain a support
of another zero, $\vc b$ is called a {\it minimal zero} of $M$. In \cite{Hildebrand2014} it
was proved that each copositive $M$ has a finite number of minimal zeros, up to multiplication by a scalar, and
every zero of $M$ is a nonnegative combination of a finite number of minimal zeros. We call a set of
minimal zeros of $M$ {\it representative} if every minimal zero of $M$ is a scalar multiple of exactly
one vector in the set. Symmetric nonnegative matrices are copositive, and so are the
positive semidefinite matrices. For $n\ge 5$ there exist  copositive matrices in $\cop_n$  that are not a sum of a positive semidefinite
matrix and a symmetric nonnegative one. Such a copositive matrix  is called {\it exceptional}.
From \cite{Hildebrand2014} we know that if an exceptional  matrix in $\cop_n$ generates an extreme ray of the cone (i.e., it is {\it extremal}),
then it has at least $n$ representative minimal zeros that span $\bR^n$.

\begin{lemma}\label{lem:WX}
Let $M$ be an $n\times n$ exceptional extremal copositive matrix, that has exactly $n$ representative minimal zeros,
 $\vdn w1n$. Let
$W=\left[\begin{array}{cccc}\vi w1&\vi w2&\dots&\vi wn \end{array}\right]$.
Then each completely positive matrix $A$, which is orthogonal to $M$ in $\sym_n$,
has the form $A=WCW^T$ for some $n\times n$ completely positive $C$, $\cpr A=\cpr C$, and
and the number of (minimal) CP factorizations of $A$ is equal to the number of (minimal)
CP factorizations of $C$.
\end{lemma}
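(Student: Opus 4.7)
The plan is to first exploit the orthogonality $\langle A,M\rangle=0$ together with the copositivity of $M$ to show that every column of every CP factor of $A$ is a zero of $M$, and then use Hildebrand's result on minimal zeros to funnel all such columns through the matrix $W$.

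First I would observe that, by the cited result of \cite{Hildebrand2014}, the minimal zeros of an exceptional extremal $M\in\cop_n$ span $\bR^n$; since by assumption there are exactly $n$ of them in a representative set, their arrangement as the columns of $W$ yields a nonsingular matrix. Next, take any CP representation $A=\sum_{i=1}^k\vvti bi$. Writing $\langle A,M\rangle=\sum_{i=1}^k \vit bi M\vi bi=0$ and using that each summand is nonnegative (because $M$ is copositive and $\vi bi\ge\vc0$), we conclude $\vit bi M\vi bi=0$ for all $i$, so each $\vi bi$ is a zero of $M$. By \cite{Hildebrand2014}, every zero of $M$ is a nonnegative combination of minimal zeros; hence $\vi bi=W\vi ci$ for some $\vi ci\ge\vc0$, with $\vi ci$ uniquely determined by the invertibility of $W$. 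Setting $B=[\vi b1|\cdots|\vi bk]$ and $C'=[\vi c1|\cdots|\vi ck]$ gives $B=WC'$ with $C'\ge0$, and therefore $A=BB^T=W(C'C'^T)W^T=WCW^T$, where $C=C'C'^T=\sum_{i=1}^k\vvti ci$ is a CP representation of $C$, which in particular is completely positive.

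Conversely, any CP representation $C=\sum_{i=1}^k\vvti ci$ yields, via $\vi bi:=W\vi ci$, a CP representation $A=\sum_{i=1}^k\vvti bi$: the vectors $\vi bi$ are nonnegative because $W\ge0$ (its columns are nonnegative), and $WCW^T=\sum_{i=1}^k W\vvti ci W^T=\sum_{i=1}^k\vvti bi$. Uniqueness of $C$ (forced by $W$ invertible) ensures that $A=WCW^T$ with a single well-defined $C\in\cp_n$. The two assignments $B\mapsto C'$ and $C'\mapsto WC'$ are mutually inverse, and both preserve the number of columns as well as the pairwise-linear-independence convention (since multiplication by the invertible $W$ preserves scalar-multiple relations). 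Consequently the map is a bijection between the CP factorizations of $A$ (in the sense of the paper's convention, i.e.\ up to column permutation and without repeated direction) and those of $C$, and it restricts to a bijection of minimal CP factorizations. This gives $\cpr A=\cpr C$ and equal counts of (minimal) CP factorizations.

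The main obstacle, and the reason the hypotheses of the lemma are indispensable, is establishing that columns of $B$ must live in the nonnegative cone spanned by $\vi w1,\ldots,\vi wn$ in a unique way. This rests on two pieces: (i) the copositivity-plus-orthogonality argument that forces each column to be a zero of $M$, which is standard; and (ii) the rigidity coming from having \emph{exactly} $n$ representative minimal zeros spanning $\bR^n$, so that $W$ is square and invertible and the nonnegative coordinates $\vi ci$ are unique. Without invertibility of $W$ one would lose both the uniqueness of $C$ and the injectivity of the correspondence on factorizations, so verifying invertibility from the extremality and exceptionality of $M$ is the pivotal step.
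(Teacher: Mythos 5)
Your proof is correct and follows essentially the same route as the paper: invertibility of $W$ from the $n$ spanning representative minimal zeros, the orthogonality-plus-copositivity argument forcing every column of a CP factor of $A$ to be a zero of $M$, the unique nonnegative coordinates $B=WC'$, and the resulting bijection between factorizations of $A$ and of $C$. The only differences are cosmetic — you spell out a few steps (why each column is a zero, why the correspondence respects the pairwise-linear-independence convention) that the paper leaves implicit.
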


\begin{proof}
By the assumptions on $M$, the nonnegative matrix $W$ is nonsingular.
Each zero of $M$ is a nonnegative combination
of $W$'s columns. If $A=BB^T$ for some nonnegative $B$, each column of $B$ is a zero of $M$,
hence $B=WX$, where $X\ge 0$,
and \begin{equation}A=(WX)(WX)^T=WCW^T,\label{eq:AC}\end{equation} where $C=XX^T$ is a completely positive matrix.
By \eqref{eq:AC}, $\cpr A\le \cpr C$.

If $A=QQ^T$ is any CP factorization of $A$, then
$Q=WY$, $Y\ge 0$.
Since \[A=QQ^T=W(YY^T)W^T\] and $W$ is nonsingular,  we get that
$YY^T=W^{-1}AW^{-T}=C$ is a CP factorization of $C$.  In particular, if $A=QQ^T$ is a minimal
CP factorization, this shows that $\cpr C\le \cpr A$, and we conclude that $\cpr A=\cpr C$.
As  $W$ is nonsingular,  $B\ne Q$ if and only if $X\ne Y$.
Hence the number of CP factorizations/minimal CP factorizations of both matrices are equal.
\end{proof}

Given a completely positive matrix orthogonal to an exceptional
extremal copositive matrix, which has $n$ representative minimal zeros, we may
compute $C=W^{-1}AW^{-T}$. If the matrix $C$ has a triangle free graph, it is
easy to check by its comparison matrix whether it has a unique CP factorization.
However, for general $n$ finding the exceptional extremal matrices in
$\cop_n$ is a major open problem. These were characterized fully
only for $n=5$ in \cite{Hildebrand2012} and for $n=6$ in \cite{AfoninDickinsonHildebrand2020}.
But some examples of exceptional extremal matrices
are known for every $n$.

In $\cop_5$ one exceptional extremal matrix is the Horn matrix
\[H=\left[\begin{array}{rrrrr}1&-1&1&1&-1\\ -1&1&-1&1&1\\
1&-1&1&-1&1\\1&1&-1&1&-1\\-1&1&1&-1&1\end{array}\right].\]
Other exceptional extremal matrices in $\cop_5$ are the matrices called Hildebrand matrices. We do
not describe these here, but  mention that each Hildebrand matrix has exactly five zeros,
up to scalar multiplication; all these zeros are minimal, and each has support of size $3$. Every   exceptional extremal matrix in $\cop_5$
is obtained from either the Horn matrix or from a Hildebrand matrix by permutation similarity
and diagonal scaling.

We can demonstrate the use of the Lemma \ref{lem:WX} in conjunction with Theorem \ref{thm:trianglefree}
for some $5\times 5$ matrices.

\begin{example}\label{ex:Hornorth}
{\rm The minimal zeros of the Horn matrix $H$ are
\[\vi wi=\vi ei+\vi e{i\plmod 1}, \quad i=1, \dots, 5,\]
where $\plmod$ denotes addition modulo 5 on $\{1,\dots, 5\}$.
Let
\[W=\left[\begin{array}{ccccc}\vi w1&\vi w2&\vi w3&\vi w4&\vi w5 \end{array}\right]=\left[\begin{array}{ccccc}1&0&0&0&1\\ 1&1&0&0&0\\
0&1&1&0&0\\0&0&1&1&0\\0&0&0&1&1\end{array}\right].\]
The matrix
\[A=\left[\begin{array}{ccccc}8&5&1&1&5\\ 5&8&5&1&1\\
1&5&8&5&1\\1&1&5&8&5\\5&1&1&5&8\end{array}\right]\]
is an example of a completely positive matrix orthogonal
to $H$ (introduced in \cite{DuerStill2008}).
The matrix
\[C=W^{-1}AW^{-T}=\left[\begin{array}{ccccc}3&1&0&0&1\\ 1&3&1&0&0\\
0&1&3&1&0\\0&0&1&3&1\\1&0&0&1&3\end{array}\right]\]
has a cyclic graph and a strictly diagonally dominant comparison matrix, hence $C$,
and therefore $A$,   has two minimal CP factorizations. Moreover, $C$, and therefore $A$,
has infinitely many CP factorizations. For example,
for every $0<t<1$ the matrix $C-tI$ has two minimal CP factorizations.
Let $C-tI=U(t)U(t)^T$ be a minimal CP factorization of $C-tI$ (one of the two).
Then \[C=\left[\begin{array}{cc}\sqrt{t}I_5& U(t) \end{array}\right]\left[\begin{array}{cc}\sqrt{t}I_5& U(t) \end{array}\right]^T, \quad 0<t<1\]
are infinitely many different CP factorizations of $C$, and
$A=V(t)V(t)^T$, where \[V(t)=W\left[\begin{array}{cc}\sqrt{t}I_5& U(t) \end{array}\right],\]
are  infinitely many CP factorizations of $A$ (not all).

On the other hand,
\[A'=\left[\begin{array}{ccccc}6&4&1&1&4\\ 4&6&4&1&1\\
1&4&6&4&1\\1&1&4&6&4\\4&1&1&4&6\end{array}\right]\]
has a unique CP representation:
\[C'=W^{-1}A'W^{-T}=\left[\begin{array}{rrrrr}2&1&0&0&1\\ 1&2&1&0&0\\
0&1&2&1&0\\0&0&1&2&1\\1&0&0&1&2\end{array}\right]\]
has a singular comparison matrix. The unique CP factorization of $C'$ happens to be
\[C'=WW^T,\]
and the unique CP factorization of $A'$ is therefore
$A'=W^2(W^2)^T$, where
\[W^2=\left[\begin{array}{ccccc}1&0&0&1&2\\ 2&1&0&0&1\\
1&2&1&0&0\\0&1&2&1&0\\0&0&1&2&1\end{array}\right].\]
We mention that it was already shown in
 \cite[Theorems 4.1 \& 4.4]{ShakedBomzeJarreSchachinger2013}  that
any positive completely positive matrix $A$ orthogonal to the Horn matrix  has $\cpr A=\rank A$, and at
most two minimal CP factorization, but without the simple rule to determine the exact number. If $A$  is positive, then $A=(WB)(WB)^T$, where $B$
is as in \eqref{eq:Bcyclic}. In this case $\cpr A=\rank A=\rank B=5$. Lemma \ref{lem:WX}
combined with  Corollary \ref{cor:cycle} yield a characterization, previously obtained
in \cite{Zhang2019}, of when is a given
minimal CP factorization of such $A$ unique.

Now consider matrices in $\cp_5$ there  are orthogonal to a Hildebrand matrix. As
mentioned above a Hildebrand matrix has
exactly five minimal zeros, all minimal, up to scalar multiplication,
any completely positive matrix orthogonal to a Hildebrand matrix has
a unique factorization $A=VDV^T$, where $V$ is the matrix of representative
zeros of the Hildebrand matrix, and $D$ is a nonnegative diagonal matrix. As
such $D$ has a unique CP factorization (as a direct sum of rank $1$ matrices...), so does $A$. This was previously
proved (for a positive $A$) in \cite[Theorem 4.4]{ShakedBomzeJarreSchachinger2013}.}
\end{example}

These examples  are not unique to $\cp_5$.
Results from \cite{Hildebrand2017} show that
exceptional extremal ``Hildebrand-like" matrices exist in $\cop_n$ for every
odd order $n\ge 5$ (with $n$  representative zeros, all minimal, whose supports
are the sets $\{1, \dots, n\}\setminus \{i,i\plmod 1\}$, where $\plmod$
denotes addition modulo $n$ on $\{1, \dots n\}$). Completely positive
matrices orthogonal to these have a unique CP factorization.
By \cite{Hildebrand2017},
exceptional extremal ``Horn-like" matrices exist in $\cop_n$ for every $n\ge 5$
(where each zero is a sum of two minimal zeros, and there are zeros
whose supports are the sets $\{1, \dots, n\}\setminus \{i,i\plmod 1\}$, where $\plmod$
denotes addition modulo $n$ on $\{1, \dots n\}$). Depending on the
minimal zeros, the example of completely positive matrices orthogonal to
the Horn matrix with either one or two minimal CP factorizations
may be recreated in these orders. Such examples definitely exist in $\cop_6$
by \cite[Theorem 8.1]{Hildebrand2017}.

\section{The minimal face of $\cp_n$ containing $A$}\label{sec:faces}
The facial structure of the completely positive cone is of interest, and not yet
thoroughly explored. We recall that a subcone $\F$ of a convex cone $\K$ is
a  {\it face} of $\K$ if $X, Y\in \K$ and $X+Y\in \F$  implies that both $X$
and $Y$ are in $\F$.
In this section we demonstrate the implication of the existence of a unique CP factorization
regarding the minimal face of $\cp_n$ containing a given completely positive matrix $A$.
Let us denote the minimal face of $\cp_n$ containing  $A$ by $\F^A_{\cp_n}$. It is
 a face of $\cp_n$ that contains $A$
and is contained in every face  that contains $A$ (it is the intersection of all such faces).
For an arbitrary completely positive matrix $A$ on the boundary of $\cp_n$, the description
of $\F^A_{\cp_n}$ is as follows.

\begin{theorem}\label{thm:F^A}
Let $A\in \cp_n$. Then the minimal
face of $\cp_n$ containing $A$ is
\[\F^A_{\cp_n}=\cone\left\{\vvt b\in \R^n_+ \mid A-\vvt b\in \cp_n \right\}.\]
\end{theorem}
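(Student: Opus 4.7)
The plan is to establish mutual inclusion of $\F^A_{\cp_n}$ and $\cone(S)$, where
\[S = \{\vvt b \in \R^n_+ \mid A - \vvt b \in \cp_n\},\]
using only the defining property of a face. The inclusion $\cone(S) \subseteq \F^A_{\cp_n}$ is immediate: for any $\vvt b \in S$, the identity $A = \vvt b + (A - \vvt b)$ writes $A$ as a sum of two elements of $\cp_n$, so since $\F^A_{\cp_n}$ is a face containing $A$, both summands lie in $\F^A_{\cp_n}$; taking conic hulls gives $\cone(S) \subseteq \F^A_{\cp_n}$.

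For the reverse inclusion, by minimality of $\F^A_{\cp_n}$ it suffices to show that $\cone(S)$ is itself a face of $\cp_n$ containing $A$. Membership of $A$ is automatic from any CP factorization $A = \sum_{i=1}^{k}\vvti{b}{i}$: each rank-$1$ summand lies in $S$ because $A-\vvti{b}{i}=\sum_{j\ne i}\vvti{b}{j}\in \cp_n$. The task is therefore to verify the face property: if $X,Y\in \cp_n$ and $X+Y\in \cone(S)$, then $X,Y\in \cone(S)$.

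I would carry out that step as follows. Write $X+Y=\sum_j \lambda_j\vvti{b}{j}$ with $\lambda_j>0$ and $\vvti{b}{j}\in S$, and set $\mu=\sum_j\lambda_j$. Summing the relations $\lambda_j(A-\vvti{b}{j})\in \cp_n$ gives $\mu A-(X+Y)\in \cp_n$; adding $Y\in \cp_n$ yields $\mu A-X\in \cp_n$. Now choose CP factorizations $X=\sum_i\vvti{c}{i}$ and $\mu A-X=\sum_\ell\vvti{d}{\ell}$; their concatenation is a CP factorization of $\mu A$, so for each fixed index $i_0$,
\[A-\mu^{-1}\vvti{c}{i_0} \;=\; \mu^{-1}\Bigl(\sum_{i\ne i_0}\vvti{c}{i}+\sum_\ell\vvti{d}{\ell}\Bigr)\in \cp_n.\]
Thus $\mu^{-1}\vvti{c}{i_0}\in S$, so $\vvti{c}{i_0}\in \cone(S)$ for every $i_0$, and consequently $X\in \cone(S)$; the same argument applied to $Y$ finishes the face property.

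The main obstacle is this face-verification step. Its essence is the observation that $\cone(S)$ coincides with the order ideal $\{Z\in \cp_n\mid \mu A-Z\in \cp_n\text{ for some }\mu>0\}$: once one sees that $X+Y\in \cone(S)$ forces both $X$ and $Y$ into this ideal (the latter because $Y\in \cp_n$), reconstructing CP factorizations of $X$ from those of $\mu A$ is routine and the individual rank-$1$ pieces, after rescaling by $\mu^{-1}$, serve as generators of $\cone(S)$.
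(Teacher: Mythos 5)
Your argument is correct and follows essentially the same route as the paper: the first inclusion is identical, and your verification of the face property --- deducing $\mu A - X \in \cp_n$ from $X+Y\in\cone(S)$ and then peeling off rescaled rank-one summands of $X$ from a CP factorization of $\mu A$ --- is the same computation the paper performs via the identity $A = \tfrac{1}{t}Y + \tfrac{1}{t}\bigl(Z+\sum_i t_i(A-\vvti vi)\bigr)$ together with its preliminary observation that any CP summand of $A$ lies in the cone. The only (harmless) omission, shared with the paper, is the degenerate case $X+Y=0$, where $\mu=0$ and one simply notes $X=Y=0\in\cone(S)$.
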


\begin{proof}
Let us denote $\K=\cone\left\{\vvt b\in \bR^n_+ \mid A-\vvt b\in \cp_n \right\}$.
 To show
that $\K$ is the minimal face containing $A$, we will show that it is a face, and any face containing
$A$ contains the whole  $\K$.
We start with the latter.
Let $\F$ be a face of $\cp_n$ containing $A$, and let $\vc b$ be any nonnegative vector such that
$A-\vvt b$ is completely positive, then  \[A=\vvt b+ ( A-\vvt b \,)\in \F\] implies
that $\vvt b\in \F$. Hence $\left\{\vvt b\in \R^n_+ \mid A-\vvt b\in \cp_n \right\}\subseteq \F$, and since $\F$ is
a cone, $\K\subseteq \F$.

Now we show that  $\K$ is a face of $\cp_n$.
First, if $A=B+C$, $B, C\in \cp_n$, and
$B=\sum_{i=1}^k\vvti bi$ is a CP representation of $B$, then for each
$1\le i\le k$ the matrix  \[A-\vvti bi=\sum_{\substack{1\le j\le k\\j\ne i}}\vvti bj+C\] is completely
positive. Hence each $\vi bi\in \K$, and therefore $B=\sum_{i=1}^k\vvti bi \in \K$. Similarly $C\in \K$.

Now suppose $X$ is any matrix in $\K$. Then $X=\sum_{i=1}^m t_i\vvti vi$, where $\vi vi \in \bR^n_+$, $t_i>0$ and $A-\vvti vi\in \cp_n$
for every $i$. Let $t=\sum_{i=1}^kt_i$.
If $X=Y+Z$, where $Y,Z\in \cp_n$, write
\[A=\frac{1}{t}Y+\frac{1}{t}\left(Z+\sum_{i=1}^m t_i (A- \vvti vi) \right).\]
Then $\frac{1}{t}Y\in \K$, and reversing the roles of $Y$ and $Z$, also $\frac{1}{t}Z\in \K$.
Hence $\K$ is a face.
\end{proof}

Note that checking whether $\vvt b$ is in this face
requires checking complete positivity of a matrix, hence it is not an easy task. In the common
case that $A$ has infinitely many CP factorizations, $\F^A_{\cp_n}$ has infinitely many extreme rays.
In the special case that $A$ has a unique CP representation, the description of $\F^A_{\cp_n}$ is
more tangible, and $\F^A_{\cp_n}$ is a polyhedral cone.

\begin{corollary}\label{cor:FAunique}
Let $A\in \cp_n$ have a unique CP representation $A=\sum_{i=1}^k \vvti bi$. Then the minimal
face of $\cp_n$ containing $A$ is
\[\F^A_{\cp_n}=\cone\left\{\vvti bi\mid 1\le i\le k \right\}.\]
\end{corollary}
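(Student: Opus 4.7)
The plan is to derive the corollary as a refinement of Theorem~\ref{thm:F^A} using the uniqueness hypothesis. By that theorem,
\[\F^A_{\cp_n} = \cone\left\{\vvt b\in \R^n_+ \mid A-\vvt b\in \cp_n\right\},\]
so it suffices to show that, under uniqueness, the generating set coincides (up to nonnegative scaling) with $\{\vvti b1,\dots,\vvti bk\}$.

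The easy inclusion $\cone\{\vvti bi\}\subseteq \F^A_{\cp_n}$ is immediate: for each $i$ the matrix $A-\vvti bi=\sum_{j\ne i}\vvti bj$ is a sum of rank-one outer products of nonnegative vectors, hence is completely positive, so $\vvti bi$ lies in the generating set of Theorem~\ref{thm:F^A}.

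For the reverse inclusion, I would fix $\vc b\in \R^n_+$ with $A-\vvt b\in \cp_n$ (the case $\vc b=\vc 0$ is trivial) and choose any CP representation $A-\vvt b=\sum_{j=1}^m \vvti cj$. Concatenating gives $A=\vvt b+\sum_j \vvti cj$, which is a CP representation of $A$ but may violate the pairwise linear independence convention. The remedy is to collapse each maximal class of parallel columns: if $\lambda_1\vc u,\dots,\lambda_p\vc u$ are the columns that are positive scalar multiples of a common $\vc u\ge \vc 0$, replace them by the single column $\sqrt{\lambda_1^2+\cdots+\lambda_p^2}\,\vc u$, using the identity $\sum_t (\lambda_t \vc u)(\lambda_t \vc u)^T=(\sum_t \lambda_t^2)\vvt u$ to preserve the sum. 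The resulting representation has pairwise linearly independent columns, so by the uniqueness assumption it must coincide, up to reordering, with $A=\sum_{i=1}^k \vvti bi$. The term absorbing $\vvt b$ therefore equals some $\vvti bi$, forcing $\vc b=\alpha\,\vi bi$ for some $\alpha>0$, and hence $\vvt b=\alpha^2\,\vvti bi\in \cone\{\vvti bi\}$.

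The only delicate step is this bookkeeping with parallel columns, which is needed because the paper's definition of uniqueness built in the pairwise linear independence convention. Once this reduction is made, uniqueness is applied directly and the corollary follows without further work.
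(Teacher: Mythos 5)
Your proposal is correct and follows essentially the same route as the paper: apply Theorem~\ref{thm:F^A} and use the uniqueness hypothesis to conclude that the only rank-one summands $\vvt b$ with $A-\vvt b\in\cp_n$ are scalar multiples of the $\vvti bi$. The paper states this last step in one sentence; your careful handling of the parallel-column collapse is exactly the detail that justifies it.
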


\begin{proof}
For a nonnegative vector $\vc b\in \R^n$, $\vvt b$ may be completed to a CP representation
of $A$ if and only if $\vc b$ is one of $\vdn b1k$.
\end{proof}

\def\cprime{$'$}


\begin{thebibliography}{10}

\bibitem{AfoninDickinsonHildebrand2020}
A.~Afonin, P.~J.~C. Dickinson, and R.~Hildebrand.
\newblock The extreme rays of the $6\times 6$ copositive cone.
\newblock {\em Journal of Global Optimization}, 2020.

\bibitem{BermanDuerShaked2015}
A.~Berman, M.~D\"{u}r, and N.~Shaked-Monderer.
\newblock Open problems in the theory of completely positive and copositive
  matrices.
\newblock {\em Electronic Journal of Linear Algebra}, 29:46--58, 2015.

\bibitem{BermanPlemmons1994}
A.~Berman and R.~J. Plemmons.
\newblock {\em Nonnegative matrices in the mathematical sciences}, volume~9 of
  {\em Classics in Applied Mathematics}.
\newblock Society for Industrial and Applied Mathematics (SIAM), Philadelphia,
  PA, 1994.
\newblock Revised reprint of the 1979 original.

\bibitem{BermanShaked1998}
A.~Berman and N.~Shaked-Monderer.
\newblock Remarks on completely positive matrices.
\newblock {\em Linear and Multilinear Algebra}, 44(2):149--163, 1998.

\bibitem{BermanShaked2003}
A.~Berman and N.~Shaked-Monderer.
\newblock {\em Completely positive matrices}.
\newblock World Scientific Publishing Co., Inc., River Edge, NJ, 2003.

\bibitem{Bomze2012}
I.~M. Bomze.
\newblock Copositive optimization---recent developments and applications.
\newblock {\em European Journal of Operational Research}, 216(3):509--520,
  2012.

\bibitem{BomzeDickinsonStill2015}
I.~M. Bomze, P.~J.~C. Dickinson, and G.~Still.
\newblock The structure of completely positive matrices according to their
  {CP}-rank and {CP}-plus-rank.
\newblock {\em Linear Algebra and its Applications}, 482:191--206, 2015.

\bibitem{Dickinson2010}
P.~J.~C. Dickinson.
\newblock An improved characterisation of the interior of the completely
  positive cone.
\newblock {\em Electronic Journal of Linear Algebra}, 20:723--729, 2010.

\bibitem{Dickinson2011}
P.~J.~C. Dickinson.
\newblock Geometry of the copositive and completely positive cones.
\newblock {\em Journal of Mathematical Analysis and Applications},
  380(1):377--395, 2011.

\bibitem{DickinsonDuer2012}
P.~J.~C. Dickinson and M.~D{\"u}r.
\newblock Linear-time complete positivity detection and decomposition of sparse
  matrices.
\newblock {\em SIAM Journal on Matrix Analysis and Applications},
  33(3):701--720, 2012.

\bibitem{DickinsonGijben2014}
P.~J.~C. Dickinson and L.~Gijben.
\newblock On the computational complexity of membership problems for the
  completely positive cone and its dual.
\newblock {\em Computational Optimization and Applications}, 57(2):403--415,
  2014.

\bibitem{Diestel2018}
R.~Diestel.
\newblock {\em Graph theory}, volume 173 of {\em Graduate Texts in
  Mathematics}.
\newblock Springer, Berlin, fifth edition, 2018.

\bibitem{DrewJohnsonLoewy1994}
J.~H. Drew, C.~R. Johnson, and R.~Loewy.
\newblock Completely positive matrices associated with {$M$}-matrices.
\newblock {\em Linear and Multilinear Algebra}, 37(4):303--310, 1994.

\bibitem{Duer2010}
M.~D{\"u}r.
\newblock Copositive programming -- a survey.
\newblock In M.~Diehl, F.~Glineur, E.~Jarlebring, and W.~Michiels, editors,
  {\em Recent Advances in Optimization and its Applications in Engineering},
  pages 3--20. Springer-Verlag, Berlin Heidelberg, 2010.

\bibitem{DuerStill2008}
M.~D{\"u}r and G.~Still.
\newblock Interior points of the completely positive cone.
\newblock {\em Electronic Journal of Linear Algebra}, 17:48--53, 2008.

\bibitem{Hall1998}
M.~Hall, Jr.
\newblock {\em Combinatorial theory}.
\newblock Wiley Classics Library. John Wiley \& Sons, Inc., New York, second
  edition, 1998.
\newblock A Wiley-Interscience Publication.

\bibitem{Hildebrand2012}
R.~Hildebrand.
\newblock The extreme rays of the {$5\times 5$} copositive cone.
\newblock {\em Linear Algebra and its Applications}, 437(7):1538--1547, 2012.

\bibitem{Hildebrand2014}
R.~Hildebrand.
\newblock Minimal zeros of copositive matrices.
\newblock {\em Linear Algebra and its Applications}, 459:154--174, 2014.

\bibitem{Hildebrand2017}
R.~Hildebrand.
\newblock Copositive matrices with circulant zero support set.
\newblock {\em Linear Algebra and its Applications}, 514:1--46, 2017.

\bibitem{HornJohnson2013}
R.~A. Horn and C.~R. Johnson.
\newblock {\em Matrix analysis}.
\newblock Cambridge University Press, Cambridge, second edition, 2013.

\bibitem{Shaked2013}
N.~Shaked-Monderer.
\newblock Matrices attaining the minimum semidefinite rank of a chordal graph.
\newblock {\em Linear Algebra and its Applications}, 438(10):3804--3816, 2013.

\bibitem{ShakedBomzeJarreSchachinger2013}
N.~Shaked-Monderer, I.~M. Bomze, F.~Jarre, and W.~Schachinger.
\newblock On the cp-rank and minimal cp factorizations of a completely positive
  matrix.
\newblock {\em SIAM Journal on Matrix Analysis and Applications},
  34(2):355--368, 2013.

\bibitem{Zhang2018}
Q.~Zhang.
\newblock Completely positive cones: are they facially exposed?
\newblock {\em Linear Algebra and its Applications}, 558:195--204, 2018.

\bibitem{Zhang2019}
Q.~Zhang.
\newblock Faces of the $5\times 5$ completely positive cone.
\newblock {\em Linear and Multilinear Algebra}, 2019.

\bibitem{ZhangLi2000}
X.-D. Zhang and J.-S. Li.
\newblock Completely positive matrices having cyclic graphs.
\newblock {\em Journal of Mathematical Research and Exposition}, 20(1):27--31,
  2000.

\end{thebibliography}
\end{document}